\documentclass[10pt,reqno]{article}

\usepackage[b5paper,top=1.2in,left=0.9in]{geometry}
\usepackage{verbatim}
\usepackage{amssymb}
\usepackage{amsmath}
\usepackage{graphicx}
\usepackage{appendix}
\usepackage{color}
\usepackage{amsthm}
\usepackage{tikz}
\usetikzlibrary{arrows,positioning,decorations.pathmorphing,  decorations.markings} 

\newtheorem{Thm}{Theorem}[section]
\newtheorem{Lem}[Thm]{Lemma}
\newtheorem{Prop}[Thm]{Proposition}

\newtheorem{Rem}[Thm]{Remark}
\newtheorem{Def}[Thm]{Definition}

\newtheorem{Conj}[Thm]{Conjecture}

\newtheoremstyle{named}{}{}{\itshape}{}{\bfseries}{.}{.5em}{#1 #3}
\theoremstyle{named}

\def\R{\mathbb{R}}
\def\Q{\mathbb{Q}}

\def\Z{\mathbb{Z}}

\def\fb{\mathfrak{b}}
\def\g{\mathfrak{g}}
\def\fh{\mathfrak{h}}

\def\sl{\mathfrak{sl}}

\def\cA{\mathcal{A}}
\def\cB{\mathcal{B}}

\def\cD{\mathcal{D}}

\def\cH{\mathcal{H}}

\def\cN{\mathcal{N}}

\def\cP{\mathcal{P}}

\def\cU{\mathcal{U}}

\def\a{\alpha}
\def\b{\beta}
\def\c{\gamma}

\def\D{\Delta}

\def\d{\delta}
\def\e{\epsilon}
\def\ze{\zeta}

\def\l{\lambda}

\def\t{\tau}

\def\W{\Omega}
\def\w{\omega}

\def\bb{\textbf{b}}

\def\bc{\textbf{c}}

\def\be{\textbf{e}}

\def\bf{\textbf{f}}

\def\bH{\textbf{H}}

\def\bi{\textbf{i}}

\def\bo{\textbf{o}}

\def\bT{\textbf{T}}

\def\bU{\textbf{U}}

\def\=>{\Longrightarrow}

\def\corr{\longleftrightarrow}

\def\to{\longrightarrow}

\def\ox{\otimes}
\def\o+{\oplus}
\def\bo+{\bigoplus}

\def\<{\langle}
\def\>{\rangle}
\def\({\left(}
\def\){\right)}

\def\oo{\infty}

\def\^{\wedge}
\def\+{\dagger}

\def\inv{^{-1}}
\def\half{\frac{1}{2}}

\def\tab{\;\;\;\;\;\;}

\newcommand{\til}[1]{\widetilde{#1}}
\newcommand{\what}[1]{\widehat{#1}}

\renewcommand{\over}[1]{\overline{#1}}
\renewcommand{\vec}[1]{\overrightarrow{#1}}

\newcommand{\veca}[1]{\begin{pmatrix}#1 \end{pmatrix}}

\newcommand{\Eq}[1]{\begin{align}#1\end{align}}
\newcommand{\Eqn}[1]{\begin{align*}#1\end{align*}}

\begin{document}
\title{Positive representations, multiplier Hopf algebra, and continuous canonical basis}

\author{  Ivan C.H. Ip\footnote{
         	Kavli Institute for the Physics and Mathematics of the Universe (WPI), 
		The University of Tokyo, 
		Kashiwa, Chiba 
		277-8583, Japan
		\newline
		Email: ivan.ip@ipmu.jp
          }
}

\date{\today}

\numberwithin{equation}{section}

\maketitle

\begin{abstract}
We introduce the language of multiplier Hopf algebra in the context of positive representations of split real quantum groups, and discuss its applications with a continuous version of Lusztig-Kashiwara's canonical basis, which may provide a key to prove the closure of the positive representations under tensor products, and harmonic analysis of quantized algebra of functions in the sense of locally compact quantum groups.
\end{abstract}

\section{Introduction}\label{sec:intro}

The notion of \emph{positive principal series representations}, or \emph{positive representations} for short, was introduced by I. Frenkel and the author \cite{FI} as a new research program devoted to the representation theory of split real quantum groups $\cU_{q\til{q}}(\g_\R)$, where $q=e^{\pi ib^2}$ and its dual  $\til{q}=e^{\pi ib^{-2}}$. It uses the concept of modular double for quantum groups \cite{Fa1, Fa2}, and has been studied extensively for $U_{q\til{q}}(\sl(2,\R))$ by Teschner \textit{et al.} \cite{BT, PT1, PT2} in the context of Liouville conformal field theory. Explicit construction of the positive representations $\cP_\l$ of  $\cU_{q\til{q}}(\g_\R)$ associated to a simple Lie algebra $\g$ has been obtained for the simply-laced case in \cite{Ip2} and non-simply-laced case in \cite{Ip3}, where the generators $\{E_i, F_i, K_i\}$ of the quantum groups are realized by positive, unbounded, essentially self-adjoint operators acting on $L^2(\R^{l(w_0)})$. Here $l(w_0)$ is the length of the longest element $w_0$ in the Weyl group $W$. Furthermore, the so-called \emph{transcendental relations} of the (rescaled) generators:
\Eq{\til{\be_i}=\be_i^{\frac{1}{b_i^2}},\tab \til{\bf_i}=\bf_i^{\frac{1}{b_i^2}}, \tab \til{K_i}=K_i^{\frac{1}{b_i^2}}\label{trans}}
give the self-duality between different parts of the modular double, while in the non-simply-laced case, new explicit analytic relations between the quantum group $\cU_q(\g_\R)$ and its Langlands dual $\cU_{\til{q}}({}^L\g_\R)$ have been observed \cite{Ip3}.

It becomes clear in the work \cite{Ip1, Ip4} that the positivity of the generators is immediately connected with the notion of multiplier Hopf algebra introduced by van Daele \cite{VD}. Namely, by taking complex powers of the generators as unbounded operators, we can define a $C^*$-algebraic version of the Drinfeld-Jimbo quantum groups $\cU_{q\til{q}}^{C^*}(\g_\R)$ that is in some sense dual to certain locally compact quantum groups \cite{KV1,KV2}, based on earlier work by Baaj and Skandalis \cite{BS}, Woronowicz \cite{W} and others. This allows us to define the modular double as a quasi-triangular multiplier Hopf algebra with the universal $R$-operator being the canonical element of the corresponding multiplier Drinfeld's double, giving a braiding structure of the positive representations. 

More importantly, we believe that this establishes a link between the $C^*$-algebraic theory of Connes' non-commutative geometry and Woronowicz's matrix quantum groups, and the theory of Drinfeld-Jimbo type quantum groups at the algebraic and combinatoric level, providing both sides with new machineries. 

The construction of $\cU_{q\til{q}}^{C^*}(\g_\R)$ involves a continuous version of the PBW basis, and subsequently the concept of a continuous version of the \emph{canonical basis}, generalizing the finite dimensional concept first discovered by Lusztig \cite{Lu1,Lu2}, and subsequently proved to exist by Kashiwara \cite{Ka2}, using a different method under the name of \emph{global crystal basis}. One can think of it as an analogue to the relation between the (discrete) Fourier series in the harmonic analysis over the circle, which is compact, and the (continuous) Fourier transform in the harmonic analysis over the real line, which is non-compact, with the exponential functions in both cases being the corresponding basis. Furthermore, the use of the remarkable quantum dilogarithm function $G_b(x)$ and its variant $g_b(x)$ introduced earlier by Faddeev and Kashaev \cite{FKa}, will play the role of the $q$-factorial as well as quantum exponential function in the (continuous) split real setting. 

We believe that the nice properties of the canonical basis, especially its behavior under the decomposition of tensor products \cite{Ka2, Lu3}, can be generalized to the continuous version in certain settings, which enables us to complete the proof of the tensor category structure of the category $Rep^+(\cU_{q\til{q}}(\g_\R))$ of positive representations.

Moreover, the use of the multiplier Hopf algebra also allows us to study the dual of $\cU_{q\til{q}}^{C^*}(\g_\R)$, which appears to be the right object for the harmonic analysis of quantized algebra of functions, and coincides with the notion of matrix quantum groups in the locally compact quantum groups setting \cite{KV1, KV2}. With the machinery of the multiplicative unitary, the language of multiplier Hopf algebra provides insight in proving a version of Peter-Weyl theorem for the modular double of split real quantum groups.

As a final remark, the modular double $\cU_{q\til{q}}(\sl(2,\R))$ arising from Liouville theory is known to be closely related to quantum Teichm\"{u}ller theory through the construction of conformal blocks \cite{T}. On the other hand, it can also be constructed out of the representation theory of the Borel part of $\cU_{q\til{q}}(\sl(2,\R))$, which is the quantum plane \cite{FK}. With the help of the continuous canonical basis in the higher rank, the restriction of the positive representations of the modular double to the Borel part $\bU\bb\subset \cU_{q\til{q}}^{C^*}(\g_\R)$ is recently studied in \cite{Ip5}, which allows us to construct a quantized version of the higher Teichm\"{u}ller theory as introduced by Fock and Goncharov \cite{FG}, and connect with Toda field theories \cite{FL, Wy}.
\section{Multiplier Hopf Algebra}

In this section let us recall the basic definitions of multiplier Hopf algebra. For further details please refer to \cite{VD}. One of the original motivation of multiplier Hopf algebra is the following. Consider continuous functions $C[G]$ on a compact group $G$. Then the multiplication on $G$ induces a coproduct $\D:C[G]\to C[G]\ox C[G]$ given by
\Eq{\D f(g_1,g_2)=f(g_1g_2).}
When we discuss locally compact group $G$, due to the Gelfand-Naimark theorem, it is natural to consider the $C^*$-algebra $C_0(G)$ of functions vanishing at infinity. However, the formula for $\D$ above may no longer maps to $C_0(G)\ox C_0(G)$. Nonetheless, $\D f$ is still bounded and multiplication with elements in $C_0(G)\ox C_0(G)$ stays in the same space. Hence it is natural to extend the definition of the coproduct and introduce the following notion.

\begin{Def}
Let $\cB(\cH)$ be the algebra of bounded linear operators on a Hilbert space $\cH$. Then the multiplier algebra $M(\cA)$ of a $C^*$-algebra $\cA\subset \cB(\cH)$ is the $C^*$-algebra of operators
\Eq{M(\cA)=\{b\in \cB(\cH): b\cA\subset \cA, \cA b\subset \cA\}.}
In particular, $\cA$ is an ideal of $M(\cA)$.
\end{Def}
\begin{Def}\label{mHa}
A multiplier Hopf *-algebra is a $C^*$-algebra $\cA$ together with the antipode $S$, the counit $\e$, and the coproduct map
\Eq{\D:\cA\to M(\cA\ox \cA),}
all of which can be extended to a map from $M(\cA)$, such that the usual properties of a Hopf algebra holds on the level of $M(\cA)$.
\end{Def}

In particular, the standard example is $\cA=C_0(G)$, and $M(\cA)$ will be the algebra of bounded functions $C_b(G)$ on the locally compact group $G$.

Another motivation comes from the definition of the universal $R$-matrix, which is a very important solution to the Yang-Baxter equation in the theory of compact quantum groups giving the category of finite dimensional representation a braiding structure \cite{D}. However, $R$ is defined as an element of certain completion $\cU_\hbar(\g)\what{\ox}\cU_\hbar(\g)$ which is not very natural in the split real setting. In particular, for the theory of modular double $\cU_{q\til{q}}(\g_\R)$, the universal $R$ operator is defined by an integral transformation, and therefore the expression using generators of the split real quantum group does not lie in any algebraic completions. The following definition gives a natural interpretation of $R$ as an element of a multiplier Hopf algebra.

\begin{Def}\label{qtmHa}
A quasi-triangular multiplier Hopf algebra is a multiplier Hopf algebra $\cA$ together with an invertible element $R\in M(\cA\ox \cA)$ such that 
\Eq{(\D \ox id)(R)=&R_{13}R_{23}\in M(\cA\ox \cA\ox \cA),\\
(id\ox\D)(R)=&R_{13}R_{12} \in M(\cA\ox \cA\ox \cA),\\
\D^{op}(a)R=&R\D(a)\in M(\cA\ox \cA),\tab \forall a\in M(\cA),\\
(\e\ox id)(R)=&(id\ox \e)(R)=1\in M(\cA).
}
\end{Def}

In Section \ref{sec:CCB}, we will discuss the construction of a $C^*$-algebraic version of the Drinfeld-Jimbo type quantum group $\cU_{q\til{q}}(\g_\R)$ using a continuous version of the canonical basis, such that $R$ naturally arises as the canonical element of this multiplier algebra.

Finally in Section \ref{sec:harmonic}, we discuss the application of multiplier Hopf algebra in the context of harmonic analysis on the quantized algebra of functions on $G_{q\til{q}}^+(\R)$, which is certain dual algebra to $\cU_{q\til{q}}(\g_\R)$. Here the power of multiplier Hopf algebra comes with the tool $W$ called the \emph{multiplicative unitary}. More precisely, 

\begin{Def} Given a GNS-representation of the multiplier Hopf aglebra $\cA$ on the Hilbert space $\cH$, a \emph{multiplicative unitary} is a unitary operator $W\in \cB(\cH\ox \cH)$ satisfying the pentagon equation
\Eq{W_{23}W_{12}=W_{12}W_{13}W_{23},}
where the leg notation is used. 
\end{Def}

Then the following key properties are needed:
\begin{Prop} The multplicative unitary encodes the information of the coproduct, i.e.
\Eq{W^*(1\ox x)W = \D(x),\tab x\in\cA}
as operators on $\cH\ox \cH$.
\end{Prop}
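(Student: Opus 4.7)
The plan is to construct $W$ explicitly on the GNS Hilbert space via the coproduct, after which the identity $W^*(1 \ox x) W = \D(x)$ reduces to multiplicativity of $\D$ applied on a dense subspace. I would fix a faithful left-invariant positive weight $\phi$ on $\cA$ (the analogue of a Haar weight) together with the associated GNS map $\L : \cA \to \cH$. On vectors of the form $\L(a) \ox \L(b)$ with $a, b \in \cA$, one defines
\[ W^*(\L(a) \ox \L(b)) = (\L \ox \L)\bigl(\D(b)(a \ox 1)\bigr). \]
The right-hand side is well-defined because $\D(b) \in M(\cA \ox \cA)$ and hence $\D(b)(a \ox 1) \in \cA \ox \cA$. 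Left invariance of $\phi$ guarantees that this formula extends to an isometry, and a parallel prescription for $W$ itself shows that $W$ is in fact unitary; the pentagon relation is then an independent consequence of coassociativity.

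With $W$ in place, I would compute $W^*(1 \ox x) W$ on a vector of the form $\xi = (\L \ox \L)\bigl(\D(b)(a \ox 1)\bigr)$ in three moves. First, $W \xi = \L(a) \ox \L(b)$ by definition. Second, $(1 \ox x)(\L(a) \ox \L(b)) = \L(a) \ox \L(xb)$, using that the $\cA$-action on $\cH$ is the GNS action by left multiplication. Third,
\[ W^*\bigl(\L(a) \ox \L(xb)\bigr) = (\L \ox \L)\bigl(\D(xb)(a \ox 1)\bigr) = (\L \ox \L)\bigl(\D(x) \D(b)(a \ox 1)\bigr), \]
where I used the $*$-homomorphism property $\D(xb) = \D(x) \D(b)$ extended to $M(\cA \ox \cA)$. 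Since $\D(x) \in M(\cA \ox \cA)$ acts on $\cH \ox \cH$ by left multiplication before the GNS map, this last expression equals $\D(x) \xi$. Hence $W^*(1 \ox x) W$ and $\D(x)$ agree on every such $\xi$.

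The main obstacle, and the step that requires the most care, is the density of the set $\{(\L \ox \L)(\D(b)(a \ox 1)) : a, b \in \cA\}$ in $\cH \ox \cH$, equivalently the density of $\D(\cA)(\cA \ox 1)$ inside $\cA \ox \cA$. This is precisely van Daele's density axiom for a regular multiplier Hopf algebra, and it is what guarantees both that $W$ extends unitarily to all of $\cH \ox \cH$ and that the identification $W^*(1 \ox x) W = \D(x)$ propagates from the dense domain to the whole Hilbert space by continuity of both (bounded) sides. In the split real setting of $\cU_{q\til{q}}^{C^*}(\g_\R)$ considered later in the paper, verifying this density axiom amounts to an analytic statement about the range of the coproduct of the continuous canonical basis elements, and this is precisely where the deeper combinatorial input from the canonical basis machinery is expected to enter.
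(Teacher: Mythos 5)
The paper does not actually prove this proposition: it is quoted as a standard fact about multiplicative unitaries, with the details deferred to the references [Ip1, Tim]. Your argument is exactly the standard one found there (the Baaj--Skandalis / Kustermans--Vaes construction): define $W^*(\Lambda(a)\ox\Lambda(b))=(\Lambda\ox\Lambda)\bigl(\D(b)(a\ox 1)\bigr)$, use left invariance of the Haar weight, in the form $(\phi\ox\phi)\bigl((a^*\ox 1)\D(b^*b)(a\ox 1)\bigr)=\phi(a^*a)\phi(b^*b)$, to get an isometry, and use the density of $\D(\cA)(\cA\ox 1)$ in $\cA\ox\cA$ both for unitarity and for propagating the identity $W^*(1\ox x)W=\D(x)$ from the dense set of vectors $(\Lambda\ox\Lambda)(\D(b)(a\ox 1))$; your three-step computation on such vectors is correct. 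Two minor points of hygiene: for a non-unital $C^*$-algebra carrying an unbounded Haar weight (the relevant situation for the non-compact quantum groups of this paper) the GNS map $\Lambda$ is defined only on the left ideal $\fN_\phi=\{a\in\cA:\phi(a^*a)<\infty\}$, so $a,b$ should be taken there --- harmless, since $\fN_\phi$ is a left ideal and $\D(b)(a\ox 1)\in\fN_{\phi\ox\phi}$; and unitarity of $W$ follows not from a ``parallel prescription for $W$'' but from the fact that the isometry $W^*$ has dense range, which is the same density condition you already isolated. With those adjustments your write-up is a complete proof and coincides with the one given in the cited sources.
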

\begin{Prop} The multiplicative unitary encodes the information of the Hopf dual $\what{\cA}$, i.e.
\Eq{\what{\cA}:=\{(\w\ox 1)W:\w\in\cB(\cH)^*\}^{\mbox{norm closure}}\subset \cB(\cH),}
and we have $W\in M(\cA\ox \what{\cA})$.
\end{Prop}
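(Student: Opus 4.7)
The plan is to use the pentagon equation $W_{23}W_{12}=W_{12}W_{13}W_{23}$ throughout; multiplying on the left by $W_{12}^*$ yields the key rearrangement
\Eqn{W_{13}W_{23}=W_{12}^* W_{23} W_{12},}
and every part of the proof follows by slicing this identity against normal functionals.

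First I would show that the linear span $\{(\w\ox 1)W:\w\in\cB(\cH)^*\}$ is closed under multiplication, so that its norm closure $\what\cA$ carries an algebra structure. Given $\w_1,\w_2\in\cB(\cH)^*$, the product $(\w_1\ox id)(W)\cdot(\w_2\ox id)(W)$ equals $(\w_1\ox\w_2\ox id)(W_{13}W_{23})$, which by the rearrangement above becomes $(\w_1\ox\w_2\ox id)(W_{12}^*W_{23}W_{12})$. Because $W_{12}$ and $W_{12}^*$ live on the first two legs, the slice-map/Fubini property of $\cB(\cH)^*$ lets me absorb them into a new functional on the first two legs, rewriting the product in the form $(\mu\ox id)(W)$ for some $\mu\in\cB(\cH\ox\cH)^*$. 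Taking norm closure then gives the subalgebra property; an analogous manipulation using $W^*$ (which satisfies a pentagon-type identity derived by taking adjoints of the original pentagon) handles closure under involution.

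Next, to establish $W\in M(\cA\ox\what\cA)$, I must verify $W\cdot(\cA\ox\what\cA)\sub\cA\ox\what\cA$ and $(\cA\ox\what\cA)\cdot W\sub\cA\ox\what\cA$. By continuity it suffices to check this on the generating elements $(1\ox\w_1)W\ox(\w_2\ox 1)W$. Placing $W$ in the appropriate pair of legs inside $\cB(\cH^{\ox 4})$ and applying the pentagon (and its adjoint) once more rearranges $W\cdot\bigl((1\ox\w_1)W\ox(\w_2\ox 1)W\bigr)$ back into the same structural form, hence into $\cA\ox\what\cA$; the right-multiplication case is symmetric.

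The main obstacle will be the careful justification of the slice manipulations: one must verify that conjugation by $W_{12}$ sends normal functionals on $\cH\ox\cH$ to normal functionals, and that the resulting sums/integrals of expressions $(\mu\ox id)(W)$ converge in the operator norm. Once this slice-map machinery is in place, the pentagon algebra is essentially formal. A further technical ingredient is non-degeneracy of $\cA$ and $\what\cA$ as $C^*$-subalgebras of $\cB(\cH)$ (so that $M(\cA\ox\what\cA)$ is defined), which is a standard but non-trivial input from the Baaj--Skandalis theory of regular multiplicative unitaries.
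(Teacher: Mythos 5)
The paper itself gives no proof of this proposition: it is quoted as standard theory of multiplicative unitaries, with details deferred to \cite{Ip1, Tim} (ultimately Baaj--Skandalis and Woronowicz), so your attempt can only be measured against those standard arguments. Your central computation is correct and is exactly the classical one: slicing $W_{13}W_{23}=W_{12}^*W_{23}W_{12}$ gives $(\w_1\ox \mathrm{id})(W)\,(\w_2\ox \mathrm{id})(W)=(\theta\ox \mathrm{id})(W)$ with $\theta(y)=(\w_1\ox\w_2)\bigl(W^*(1\ox y)W\bigr)$. Note, though, that since $y\mapsto W^*(1\ox y)W$ is a normal map, $\theta$ is a \emph{single} normal functional, so the linear span is an algebra on the nose; the convergence of ``sums/integrals of slices'' that you list as the main obstacle is not actually where the difficulty lies.

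The genuine gap is your closing claim that, once the slice-map machinery is in place, ``the pentagon algebra is essentially formal.'' It is not. (i) Closure under the involution: $\bigl((\w\ox \mathrm{id})(W)\bigr)^*=(\bar{\w}\ox \mathrm{id})(W^*)$ is a slice of $W^*$, and taking adjoints of the pentagon only relates slices of $W^*$ to one another; there is no formal identity expressing such a slice as a norm limit of slices of $W$. (ii) The inclusion $W\in M(\cA\ox\what{\cA})$: your reduction to generators runs into products such as $\bigl(1\ox(\w\ox \mathrm{id})(W)\bigr)W$, which the pentagon rewrites as a slice of $W_{01}^*W_{12}W_{01}$ --- not of the required form without further input. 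Both (i) and (ii), as well as nondegeneracy, genuinely fail for irregular multiplicative unitaries; they hold here only because $W$ comes from the GNS representation of the Haar weight of a locally compact quantum group and is therefore regular/manageable in the sense of Baaj--Skandalis and Woronowicz. You invoke regularity only as a technical input for nondegeneracy, but it is the load-bearing hypothesis for most of the statement. Finally, the proposition also asserts that $\what{\cA}$ is the \emph{Hopf dual}, i.e.\ carries the dual comultiplication induced by $W$ and a nondegenerate pairing with $\cA$; your proposal does not address this part at all.
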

\begin{Prop} The multiplicative unitary gives the (left) regular co-representation of $\cA$ by
\Eq{\Pi: \cH&\to \cH\ox M(\cA)\\
f &\mapsto W'(f\ox 1),}
where $W'$ is $W_{21}$ viewed as an element in $\cB(\cH)\ox \cA$. By taking the dual we also obtain the (left) regular representation for the dual algebra.
\end{Prop}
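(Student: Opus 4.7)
The goal is to verify that $\Pi: f \mapsto W'(f\ox 1)$ satisfies the two defining axioms of a left co-representation of $\cA$ on $\cH$, namely the co-associativity $(\Pi\ox id)\Pi = (id\ox\D)\Pi$ and the co-unit condition $(id\ox\e)\Pi = id$. Well-definedness of the codomain is immediate from the preceding proposition: since $W\in M(\cA\ox\what{\cA})$, we have $W'=W_{21}\in M(\what{\cA}\ox\cA)\subset \cB(\cH)\ox M(\cA)$, so $W'(f\ox 1)$ lies in $\cH\ox M(\cA)$.

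For co-associativity, the plan is to compute both sides in leg notation on the threefold product $\cH\ox M(\cA)\ox M(\cA)$. Writing $V=W'$ and iterating the definition of $\Pi$, I obtain $(\Pi\ox id)\Pi(f) = V_{12}V_{13}(f\ox 1\ox 1)$, where $V_{12}$ and $V_{13}$ denote the embeddings of $V$ into legs $(1,2)$ and $(1,3)$. For the other side, I would apply the identity $\D(x)=W^*(1\ox x)W$ from the previous proposition to the $\cA$-leg of $V_{12}$, yielding $(id\ox\D)(V_{12}) = W_{23}^* V_{13} W_{23}$, and hence $(id\ox\D)\Pi(f) = W_{23}^* V_{13} W_{23}(f\ox 1\ox 1)$. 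The desired equality therefore reduces to the operator identity
\Eq{W_{23}W_{21}W_{31} = W_{31}W_{23},}
which is precisely the pentagon equation $W_{12}W_{13}W_{23}=W_{23}W_{12}$ after the cyclic relabeling $(1\,2\,3)$ of the three legs, hence holds by hypothesis.

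The co-unit condition $(id\ox\e)\Pi(f)=f$ reduces to the identity $(\e\ox id)(W)=1\in M(\what{\cA})$, which follows by applying $(\e\ox id\ox id)$ to both sides of $\D(x)=W^*(1\ox x)W$ and invoking the co-unit axiom $(\e\ox id)\D=id$ for the multiplier Hopf algebra $\cA$.

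The main obstacle I expect is analytic rather than algebraic. The leg-notation manipulations above are formal until one verifies them as genuine bounded-operator equalities: strictly speaking, $W=\sum a\ox b$ is not a finite sum, so each computation must be justified either on a dense subspace of $\cH$ or through slice maps $(id\ox\w)$ and $(\w\ox id)$ with $\w\in\cB(\cH)^*$, using the characterizations of $\cA$ and $\what{\cA}$ from the previous proposition. One must also interpret $W'(f\ox 1)$ as an element of the Hilbert $C^*$-module $\cH\ox\cA$ extended to its multiplier, which is where the compatibility of multiplier algebras with tensor products must be handled carefully. Once these identifications are in place, both axioms reduce transparently to the pentagon equation and the coproduct formula of the previous propositions.
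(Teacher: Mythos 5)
The paper does not actually prove this proposition; it is stated as one of three standard properties of a multiplicative unitary with a pointer to \cite{Ip1, Tim}, so your argument should be measured against the standard one in those references --- and it essentially is that argument. The core computation is correct: with $V=W_{21}$, the corepresentation identity $(id\ox\D)(V)=V_{12}V_{13}$ reduces, after applying $\D(x)=W^*(1\ox x)W$ to the $\cA$-leg of $V$, to the operator identity $W_{23}W_{21}W_{31}=W_{31}W_{23}$, and this is indeed the pentagon equation $W_{12}W_{13}W_{23}=W_{23}W_{12}$ conjugated by the cyclic permutation $1\mapsto2\mapsto3\mapsto1$ of the legs of $\cH^{\ox 3}$. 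Your closing list of analytic caveats (slice maps, $W$ not being a finite sum, $\cH\ox\cA$ as a Hilbert $C^*$-module and its multipliers) correctly identifies where the real work lies; that is precisely the content supplied by \cite{Tim} and by Baaj--Skandalis.

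The one step that is genuinely incomplete is the counit. First, $(\e\ox id\ox id)$ does not typecheck against the two-leg identity $\D(x)=W^*(1\ox x)W$; you mean $(\e\ox id)$. More seriously, slicing that identity with $\e$ on the first leg and using $(\e\ox id)\D=id$ yields only $u^*xu=x$ for all $x\in\cA$, where $u=(\e\ox id)(W)$; this says that $u$ is a unitary in the commutant of $\cA$ in $\cB(\cH)$, not that $u=1$. To conclude $(\e\ox id)(W)=1$ one needs an additional input --- e.g.\ the pentagon equation together with the density/non-degeneracy conditions built into the multiplier Hopf algebra framework, or the fact that in the multiplicative-unitary approach the counit is \emph{constructed} so that this slice is the identity. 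Since the counit of a general multiplicative unitary need not even be everywhere defined, this is not a purely cosmetic point, and you should either cite the relevant fact or supply the extra argument.
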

We refer to \cite{Ip1, Tim} for more detailed discussions.

\section{Positive Representations of $\cU_{q\til{q}}(\g_\R)$}
The central object in the representation theory of split real quantum groups is the notion of positive representations \cite{FI}. In this section we will review its definitions and properties. For simplicity of this exposition, let us only consider simple Lie algebra $\g$ of simply-laced type \cite{Ip2}. Details for the non-simply-laced case can be found in \cite{Ip3}.

Let $q=e^{\pi ib^2}$ with $0<b<1$, $b^2\in \R\setminus\Q$, and consider the usual Drinfeld-Jimbo type quantum group $\cU_q(\g)$ generated by $\{E_i, F_i, K_i\}$ for $i\in I$ the root indices, satisfying:
\Eq{
K_iE_j&=q^{a_{ij}}E_jK_i,\\
K_iF_j&=q^{-a_{ij}}F_jK_i,\\
{[E_i,F_j]} &= \d_{ij}\frac{K_i-K_i\inv}{q-q\inv},
}
together with the Serre relations for $i\neq j$:
\Eq{
E_i^2E_j-(q+q\inv)E_iE_jE_i+E_jE_i^2&=0,\\
F_i^2F_j-(q+q\inv)F_iF_jF_i+F_jF_i^2&=0,
}
where $A:=(a_{ij})$ is the Cartan matrix.

\begin{Thm}\cite{FI, Ip2} Let 
\Eq{\be_i:=2\sin(\pi b^2)E_i,\tab \bf_i:=2\sin(\pi b^2)F_i.\label{smallef}}
Note that $2\sin(\pi b^2)=\left(\frac{\bi}{q-q\inv}\right)\inv>0$.
Then there exists a representation $\cP_{\l}$ of $\cU_{q}(\g)$ parametrized by the $\R_+$-span of the cone of positive weights $\l\in P_\R^+$, or equivalently by $\l\in \R_+^n$ where $n=rank(\g)$, such that 
\begin{itemize}
\item The generators $\be_i,\bf_i,K_i$ are represented by positive essentially self-adjoint operators acting on $L^2(\R^{l(w_0)})$, where $l(w_0)$ is the length of the longest element $w_0\in W$ of the Weyl group.
\item Define the transcendental generators:
\Eq{\til{\be_i}:=\be_i^{\frac{1}{b^2}},\tab \til{\bf_i}:=\bf_i^{\frac{1}{b^2}},\tab \til{K_i}:=K_i^{\frac{1}{b^2}}.\label{transdef}}
Then the representations of the generators $\til{\be_i},\til{\bf_i},\til{K_i}$ are obtained by replacing $b$ with $b\inv$ in the representations of the generators $\be_i,\bf_i,K_i$, and they generate the other part of the modular double $\cU_{\til{q}}(\g_\R)$ with $\til{q}=e^{\pi ib^{-2}}$.
\item The generators $\be_i,\bf_i,K_i$ and $\til{\be_i},\til{\bf_i},\til{K_i}$ commute weakly up to a sign.
\end{itemize}
\end{Thm}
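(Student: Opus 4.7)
The plan is to construct $\cP_\l$ concretely by writing down each generator $\be_i, \bf_i, K_i$ as an explicit operator on $L^2(\R^N)$ with $N = l(w_0)$. First I would fix a reduced expression $w_0 = s_{i_1}\cdots s_{i_N}$ of the longest element and use the corresponding Lusztig PBW parametrization of $\cU_q^+(\g)$. Identifying the coordinates $(x_1,\ldots,x_N)$ on $\R^N$ with a continuous analogue of PBW exponents and introducing their conjugate momenta $p_k = -\frac{i}{2\pi}\partial_{x_k}$, every generator will be assembled from the elementary Weyl-commuting positive self-adjoint building blocks $e^{\pi b x_k}$ and $e^{2\pi b p_k}$. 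The base case $\g=\sl(2,\R)$ reproduces the Ponsot-Teschner positive representation, and the higher-rank simply-laced case proceeds by defining $\bf_i$ as a sum of Lusztig-type monomials attached to the occurrences of $s_i$ in the reduced word, $K_i$ as a single exponential encoding the weight shift by $\l$, and $\be_i$ by an analogous formula dual to $\bf_i$ under the longest-word symmetry, or equivalently by conjugating the $\bf_i$-expression through a braid-group intertwiner built from products of quantum dilogarithms $g_b$.

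Next I would verify the defining relations by direct computation. The commutation relations with $K_i$ are immediate from the linearity of the exponents. The mixed relation $[\be_i,\bf_j] = \d_{ij}(K_i - K_i\inv)/(q - q\inv)$ reduces, after expanding each term against the Weyl commutation rules, to a telescoping identity that follows from the pentagon equation $g_b(U)g_b(V) = g_b(V)g_b(UV)g_b(U)$ valid whenever $UV = q^2 VU$. The Serre relations should collapse to a rank-two $A_2$ computation by exploiting the braid-group consistency of the Lusztig parametrization across different reduced expressions, so that only a single case need be checked directly by quantum dilogarithm identities. Positivity and essential self-adjointness of each generator would then follow from Schm\"udgen-type criteria for sums of Weyl-commuting positive self-adjoint operators.

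Finally, the transcendental relations (\ref{transdef}) come almost for free: each elementary block $e^{\pi b(\cdot)}$ becomes $e^{\pi b\inv(\cdot)}$ on raising to the power $b^{-2}$, so by construction $\til\be_i,\til\bf_i,\til K_i$ satisfy the $b \leftrightarrow b\inv$ version of the defining relations and generate $\cU_{\til q}(\g_\R)$. Weak commutativity up to a sign between the two halves of the modular double follows from the Weyl-level identity $U^{1/b^2}V = e^{2\pi i}V U^{1/b^2} = VU^{1/b^2}$ for each elementary pair, with residual signs arising from branch choices in the spectral calculus for unbounded operators. The main obstacle is the Serre relation in general rank: while the pentagon identity cleanly handles rank two, extending it to an arbitrary reduced word requires tracking how the explicit form of $\be_i$, written in coordinates adapted to one reduced expression, transforms under an elementary braid move $s_is_js_i \leadsto s_js_is_j$, and verifying that this transformation is implemented by the expected dilogarithm intertwiner. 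A secondary technical obstacle is upgrading formal positivity into bona fide essential self-adjointness for sums of non-commuting exponentials, which requires identifying a common invariant dense domain on which all relations hold strongly.
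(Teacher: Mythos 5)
This theorem is quoted from \cite{FI,Ip2} and is not proved in the present paper, so the only internal material to compare against is the explicit $A_1$ and $A_2$ formulas of Propositions \ref{canonicalsl2} and \ref{typeA2}. Your outline does match the construction actually carried out in those references: the generators are assembled from Weyl-commuting positive blocks $e^{\pi b L(x,p)}$ with $L$ real-linear, acting on $L^2(\R^{l(w_0)})$; $\bf_i$ is a sum of monomial terms indexed by the occurrences of $s_i$ in a fixed reduced word (compare $\bf_2$ in Proposition \ref{typeA2}, which has two terms for $w_0=s_2s_1s_2$ while $\bf_1$ has one); $K_i$ is a single exponential carrying the $\l$-shift; $\be_i$ is produced by a dual/inductive procedure; and the Serre relations are reduced to rank-two checks. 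One correction of detail: the relation $[\be_i,\bf_j]=\d_{ij}(K_i-K_i\inv)/(q-q\inv)$ is verified by a direct telescoping computation using only the Weyl commutation of the exponents; the pentagon identity for $g_b$ is not what is needed there (it enters later, for the quantum Weyl elements and the $R$-operator).

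The genuine gap is in your treatment of the transcendental relations, which you claim come ``almost for free'' because each elementary block transforms termwise under $x\mapsto x^{1/b^2}$. That is true for $K_i$, a single exponential, but $\be_i$ and $\bf_i$ are sums of several noncommuting positive operators, and $\bigl(\sum_k A_k\bigr)^{1/b^2}$ is \emph{not} $\sum_k A_k^{1/b^2}$ for generic summands. The entire content of the modular double at this step is the nontrivial operator identity $(u+v)^{1/b^2}=u^{1/b^2}+v^{1/b^2}$ for positive self-adjoint $u,v$ with $uv=q^2vu$ (Faddeev's observation, established using the quantum dilogarithm $g_b$ and its self-duality \eqref{selfdualg}), together with its iterated version for the multi-term generators in higher rank. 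The same circle of ideas is what upgrades formal positivity of these sums to genuine positive essential self-adjointness, so it should be isolated and proved as the key lemma rather than absorbed into ``by construction''. With that lemma supplied, the rest of your plan is sound and is essentially the route taken in \cite{FI,Ip2}.
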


The Theorem implies that $\cP_\l$ is simultaneously a representation for the split real form $\cU_q(\g_\R)$ and $\cU_{\til{q}}(\g_\R)$ by positive operators. Hence we will call $\cP_\l$  the positive representation of the modular double $$\cU_{q\til{q}}(\g_\R):=\cU_q(\g_\R)\ox \cU_{\til{q}}(\g_\R).$$
For the non-simply-laced case, the transcendental relations give rise to a direct analytic relation between the quantum group and its Langlands dual \cite{Ip3}.

Let us present the expressions of $\cP_\l$ in the case of type $A_1$ and $A_2$. By abuse of notation, let us denote by
\Eq{[u]e(p) := e^{\pi b (-u+2p)}+e^{\pi b(u+2p)},}
which is an unbounded positive operator. Here $p:=\frac{1}{2\pi i}\frac{\partial}{\partial u}$ is the standard momentum operator.

\begin{Prop}\cite{BT,PT2}\label{canonicalsl2} The positive representation $P_\l$ of $\cU_{q\til{q}}(\sl(2,\R))$ with ${\l\in\R_+}$ acting on $f(u)\in L^2(\R)$ is given by
\Eqn{
\be=&[u-\l]e(-p)=e^{\pi b(-u+\l-2p)}+e^{\pi b(u-\l-2p)},\\
\bf=&[-u-\l]e(p)=e^{\pi b(u+\l+2p)}+e^{\pi b(-u-\l+2p)},\\
K=&e^{-2\pi bu}.
}
\end{Prop}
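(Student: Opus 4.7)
The task is to verify four things: (i) each of $K,\be,\bf$ is a positive, essentially self-adjoint operator on $L^2(\R)$; (ii) they satisfy the Drinfeld--Jimbo relations of $\cU_q(\sl(2))$ after passing from $E,F$ to $\be,\bf$; (iii) the transcendental generators obtained by $b\mapsto b\inv$ give a second representation of $\cU_{\til q}(\sl(2))$; and (iv) the two halves commute weakly up to a sign.

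The central technical tool is the Weyl relation $e^{\alpha u}e^{\beta p}=e^{i\alpha\beta/(2\pi)}e^{\beta p}e^{\alpha u}$, which follows from $[u,p]=i/(2\pi)$ and which implies $e^Ae^B=e^{A+B}e^{[A,B]/2}$ for any two real affine combinations $A,B$ of $u,p$ (the commutator being a central scalar). Each of the five exponents appearing in $K,\be_1,\be_2,\bf_1,\bf_2$ is such a combination, hence essentially self-adjoint on Schwartz functions, and its exponential is positive by Stone's theorem. For the sums $\be=\be_1+\be_2$ and $\bf=\bf_1+\bf_2$, a one-line BCH check shows that each pair $q^2$-commutes, and the positive sum of two $q^2$-commuting Weyl operators is essentially self-adjoint by standard arguments based on the irreducibility of the Schr\"{o}dinger representation, settling (i).

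For (ii), direct BCH computation gives $[\log K,\log\be_i]=2\pi i b^2$ and $[\log K,\log\bf_j]=-2\pi i b^2$ for each summand, hence $K\be=q^2\be K$ and $K\bf=q^{-2}\bf K$. For the commutator $[\be,\bf]$ I would expand $\sum_{i,j}(\be_i\bf_j-\bf_j\be_i)$: the diagonal pairs $\be_1\bf_1$ and $\be_2\bf_2$ have commuting exponents whose sums reduce to the scalars $\pm 2\pi b\l$ and contribute $0$, while the off-diagonal pairs have exponent commutators $\pm 2\pi i b^2$, producing $-2i\sin(\pi b^2)e^{-2\pi bu}$ and $+2i\sin(\pi b^2)e^{2\pi bu}$ respectively. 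Combining gives $[\be,\bf]=2i\sin(\pi b^2)(K\inv-K)$; passing to $E=\be/(2\sin(\pi b^2))$, $F=\bf/(2\sin(\pi b^2))$ and using $q-q\inv=2i\sin(\pi b^2)$ yields exactly $[E,F]=(K-K\inv)/(q-q\inv)$, and the Serre relations are vacuous in rank one.

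For (iii), the continuous quantum binomial identity (a consequence of the functional equations of the quantum dilogarithm $G_b$) gives $\be^{1/b^2}=\be_1^{1/b^2}+\be_2^{1/b^2}$ for the $q^2$-commuting summands, and each piece is simply the $b\mapsto b\inv$ version of the original exponent; the computation of (ii) then repeats verbatim with $q\leftrightarrow\til q$. For (iv), BCH between any $\pi b$-scaled exponent and any $\pi b\inv$-scaled exponent produces a central correction of the form $e^{i\pi n}=\pm 1$ for some integer $n$, giving the claimed weak commutativity up to a sign. I expect the functional-analytic content of (i) and the splitting of fractional powers in (iii) to be the main obstacles; both rest on the spectral theory of $q^2$-commuting positive unbounded operators and on the functional identities of $G_b$ as developed in \cite{FI,Ip2}.
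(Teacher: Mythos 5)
The paper states this proposition only as a citation to \cite{BT,PT2} and gives no proof, and your verification --- summand-by-summand Weyl/BCH computation of the relations, with positivity, essential self-adjointness of the $q^2$-commuting sums, and the fractional-power splitting $(\be_1+\be_2)^{1/b^2}=\be_1^{1/b^2}+\be_2^{1/b^2}$ delegated to the standard quantum-dilogarithm results --- is exactly the argument of those references, and your commutator bookkeeping ($[\be_1,\bf_1]=[\be_2,\bf_2]=0$, the off-diagonal pairs contributing $-2i\sin(\pi b^2)K$ and $+2i\sin(\pi b^2)K\inv$, hence $[E,F]=(K-K\inv)/(q-q\inv)$) checks out. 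The only cosmetic slip is crediting Stone's theorem rather than the spectral theorem for the positivity of $e^{A}$ with $A$ self-adjoint; the two genuinely nontrivial analytic inputs you correctly isolate (self-adjointness of the sum of $q^2$-commuting positive operators, and the transcendental splitting) are precisely the content of the cited literature and need no new argument here.
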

\begin{Prop}\cite{Ip2}\label{typeA2} The positive representation $\cP_\l$ of $\cU_{q\til{q}}(\sl(3,\R))$ with parameters $\l:=(\l_1,\l_2)\in\R_+^2$, corresponding to the reduced expression $w_0=s_2s_1s_2$, acting on $f(u,v,w)\in L^2(\R^3)$, is given by
\Eqn{
\be_1=&[v-w]e(-p_v)+[u]e(-p_v+p_w-p_u),\\
\be_2=&[w]e(-p_w),\\
\bf_1=&[-v+u-2\l_1]e(p_v),\\
\bf_2=&[-2u+v-w-2\l_2]e(p_w)+[-u-2\l_2]e(p_u),\\
K_1=&e^{-\pi b(-u+2v-w+2\l_1)},\\
K_2=&e^{-\pi b(2u-v+2w+2\l_2)}.
}
\end{Prop}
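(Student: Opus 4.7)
The plan is to verify that the operators in Proposition \ref{typeA2} satisfy all the Drinfeld-Jimbo relations of $\cU_q(\sl(3))$, together with positivity, essential self-adjointness, and the transcendental relations \eqref{transdef}. The basic tool is the Weyl commutation for the canonical Heisenberg pairs $[u,p_u]=[v,p_v]=[w,p_w]=(2\pi i)\inv$: if $X$ is a real-linear form in the position variables $u,v,w$ and $Y$ a real-linear form in the momenta $p_u,p_v,p_w$, then $e^{\pi b X}e^{\pi b Y}=e^{-\pi i b^2 c}e^{\pi b Y}e^{\pi b X}$, where $c$ is the scalar commutator $2\pi i[X,Y]$ read off from the coefficients of $X$ and $Y$. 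Every generator listed in Proposition \ref{typeA2} is a sum of at most two exponentials of such linear forms, so each relation reduces to a finite combinatorial identity on these coefficients.

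I would verify the relations in the following order. First, the Cartan relations $K_i\be_j=q^{a_{ij}}\be_j K_i$ and $K_i\bf_j=q^{-a_{ij}}\bf_j K_i$ follow immediately from reading off coefficients in the exponents. Second, the off-diagonal vanishings $[\be_1,\bf_2]=[\be_2,\bf_1]=0$ hold term by term: each pair of exponentials $q$-commutes, and the scalar factors conspire so that the cross-terms cancel. Third, the diagonal commutators $[\be_i,\bf_i]$ must equal $(q-q\inv)(K_i\inv-K_i)$ after the rescaling \eqref{smallef}; this is the heart of the computation. Fourth, the Serre relations $\be_i^2\be_j-(q+q\inv)\be_i\be_j\be_i+\be_j\be_i^2=0$ (and similarly for $\bf_i$) are checked using the quantum binomial identity: whenever $YX=q^2 XY$ one has $(X+Y)^n=\sum_k \binom{n}{k}_{q^2} X^k Y^{n-k}$, which reduces each Serre expression to a polynomial identity in the quantum plane generated by the individual exponential letters.

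The main obstacle is the third step. For instance, $[\be_2,\bf_2]$ pits the single summand $[w]e(-p_w)$ against the two summands $[-2u+v-w-2\l_2]e(p_w)$ and $[-u-2\l_2]e(p_u)$, producing four exponential cross-terms; some cancel immediately, and the remaining residue must reassemble into a difference of exponentials of exactly the linear form $-\pi b(2u-v+2w+2\l_2)$ that defines $K_2$. This delicate matching is dictated by the reduced word $w_0=s_2s_1s_2$ and reflects the embedded $\sl(2)$-triples hidden within the coordinates $(u,v,w)$, which extend the one-variable picture of Proposition \ref{canonicalsl2}. Once the algebraic relations hold, positivity and essential self-adjointness of each generator follow from the standard fact used throughout \cite{Ip2} that $e^{\pi bX}+e^{\pi bY}$ is positive and essentially self-adjoint whenever $[X,Y]$ is a positive multiple of $(2\pi i)\inv b^2$. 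The transcendental relations \eqref{transdef} then follow because the $(1/b^2)$-th power of such a pair of $q$-commuting exponentials simply replaces $b$ by $b\inv$, and the weak commutativity up to a sign of the $b$- and $b\inv$-parts of the modular double reduces to the observation that every mixed Weyl commutator evaluates to $e^{\pi i n}=\pm 1$ for some integer $n$.
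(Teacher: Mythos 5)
First, note that the paper itself offers no proof of Proposition \ref{typeA2}: it is quoted verbatim from \cite{Ip2}, so your attempt has to be measured against the argument given there. Your overall strategy --- reduce every Drinfeld--Jimbo relation to Weyl commutation of exponentials of real-linear forms in $u,v,w,p_u,p_v,p_w$ and match scalar factors --- is the right one and is essentially how the algebraic relations are verified in \cite{Ip2}. The ordering of checks (Cartan relations, off-diagonal $[\be_i,\bf_j]=0$, diagonal commutators reassembling into $K_i^{\pm1}$, then Serre) is sensible, and spot-checking (e.g.\ $K_2\be_1=q^{-1}\be_1K_2$, $[\be_2,\bf_1]=0$ by disjointness of variables) confirms the formulas are consistent with this plan. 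Two cosmetic points: the scalar in your Weyl relation should carry a factor $\tfrac12$ (with $[X,Y]=c/(2\pi i)$ one gets $e^{\pi bX}e^{\pi bY}=e^{-\pi i b^2 c/2}e^{\pi bY}e^{\pi bX}$), and the Serre relation is not literally an instance of the $q$-binomial theorem --- one simply expands into monomials in the $q$-commuting exponential letters and collects coefficients --- but neither affects the viability of the computation.

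The genuine gap is in the functional-analytic half. Your appeal to ``the standard fact that $e^{\pi bX}+e^{\pi bY}$ is positive and essentially self-adjoint'' covers $\be_2$, $\bf_1$, $K_1$, $K_2$, and each individual bracket $[\,\cdot\,]e(\cdot)$, but it does \emph{not} cover $\be_1$ and $\bf_2$, which are sums of \emph{four} exponentials. Writing $\be_1=A+B$ with $A=[v-w]e(-p_v)$ and $B=[u]e(-p_v+p_w-p_u)$, a direct computation of the cross-commutators shows that one exponential summand of $A$ satisfies $XB=q^{2}BX$ while the other satisfies $XB=q^{-2}BX$; so $A$ and $B$ do not uniformly $q^2$-commute and the two-term lemma cannot be iterated naively. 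Essential self-adjointness, positivity, and above all the transcendental relation $\be_1^{1/b^2}=\til{\be_1}$ for such multi-term operators are precisely the delicate analytic content of \cite{Ip2}: there they are established by exhibiting unitary equivalences (conjugation by quantum dilogarithms $g_b$ of suitable arguments) that reduce the four-term operators to the standard two-term form, for which the lemma and the $b\corr b\inv$ duality \eqref{selfdualg} do apply. Without an argument of this kind your last paragraph asserts rather than proves the second and third bullet points of the theorem this proposition instantiates.
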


\section{Continuous Canonical Basis}\label{sec:CCB}
Again for simplicity, in the following let us consider only the simply-laced case, further discussions can be found in \cite{Ip4}. Recall that in the theory of canonical basis introduced by Lusztig \cite{Lu1,Lu2}, the necessary ingredients are the rescaled simple root generators 
\Eq{\label{simple} E_i^{(N)}:=\frac{E_i^N}{[N]_q!},}
where $[n]_q=\frac{q^n-q^{-n}}{q-q\inv}$ is the quantum number, and the non-simple roots vectors given by Lusztig's isomorphism $T_i$:
\Eq{\label{nonsimple} E_{\a_k}^{(c)}:=T_{i_1}...T_{i_{k-1}}(E_{i_k}^{(c)}),}
such that the canonical basis is given by certain linear combinations of the PBW-basis of the form
\Eq{\label{cbasis}E_\bi^\bc := E_{\a_N}^{(c_N)}...E_{\a_2}^{(c_2)}E_{\a_1}^{(c_1)},}
where $w_0=s_{i_1}...s_{i_N}$ is an expression of the longest element of the Weyl group $W$.

In the context of positive representations, the generators $\{E_i\}_{i\in I}$ of $\cU_{q\til{q}}(\g_\R)$ are represented on $\cP_\l$ by positive self-adjoint operators. Although the defining formula for Lusztig's isomorphism:
\Eq{T_i(E_i):=-F_iK_i\inv,\tab T_i(E_j):=q^\half E_jE_i-q^{-\half} E_iE_j}
in the compact case does not respect the positivity of the positive representations, it turns out that we can consider instead our favorite rescaled generators $\{\be_i, \bf_i\}$ from \eqref{smallef}, and modify the scaling to obtain
\Eq{T_i(\be_i):=q \bf_iK_i \inv,\tab T_j(\be_i):=\be_{ij}:=\frac{q^\half \be_j\be_i-q^{-\half}\be_i\be_j}{q-q\inv},}
which are again positive self-adjoint, and obey the transcendental relations 
\Eq{\be_{ij}^{\frac{1}{b^2}}=\til{\be_{ij}}:=\frac{\til{q}^\half \til{\be_j}\til{\be_i}-\til{q}^{-\half}\til{\be_i}\til{\be_j}}{\til{q}-\til{q}\inv}.}
Furthermore, $T_i$ is shown to be represented by conjugation by a unitary element $w_i$, called the quantum Weyl element. Hence all generators of the form (cf. \eqref{nonsimple})
\Eq{\label{eak}\be_{\a_k}:=T_{i_1}...T_{i_{k-1}}(\be_{i_k})}
are positive self-adjoint and satisfy the transcendental relations. In particular, the expression $\be_{\a_k}^{ib\inv t}$ is a well-defined bounded unitary operator acting on $\cP_\l$, and invariant under the $b\corr b\inv$ duality.

Therefore together with the notation $K_i=q^{H_i}$, we generalize the PBW-basis \eqref{cbasis} and the following definition is well-defined \cite{Ip4}:
\begin{Def} \label{Ub}Let $n=rank(\g)$ and $N=l(w_0)$. We define the $C^*$-algebraic version of the Borel subalgebra $$\bU\bb:=\cU_{q\til{q}}^{C^*}(\fb_\R^+)$$ as the operator norm closure of the linear span of all bounded operators on $\cP_\l=L^2(\R^N)$ of the form
\Eq{\vec{F}:= F_0(\bH)\prod_{k=1}^N  \int_C F_k(t_k)\frac{\be_{\a_k}^{i b\inv t_k}}{G_{b}(Q+i t_k)} dt_k,}
where $Q=b+b\inv$, $\be_{\a_k}$ is given by \eqref{eak} and
\Eq{F_0(\bH):=F_0(i bH_1,...,i bH_n)}
is a smooth compactly supported functions on the positive operators $i b H_k$, and
$F_k(t_k)$ are entire analytic functions that have rapid decay along the real direction. Finally the contour $C$ is along the real axis which goes above the pole of $G_b$ at $t_k=0$.
\end{Def}
\begin{Thm} \begin{itemize}
\item[(1)] $\bU\bb$ is a multiplier Hopf algebra;
\item[(2)] it is invariant under the modular duality $b\corr b\inv$;
\item[(3)] it is independent of the choice of the parameter $\l$;
\item[(4)] it is independent of the choice of reduced expression of the longest element $w_0\in W$.
\end{itemize}
\end{Thm}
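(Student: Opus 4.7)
The plan is to break the proof into the four claims, with (1) being the technical core and (2)--(4) following as relatively structural consequences once the integral calculus of Definition 4.1 is under control. For (1), I would verify in turn that $\bU\bb$ is a $C^*$-algebra (closure under multiplication and $*$), that it admits a coproduct landing in $M(\bU\bb\ox\bU\bb)$, and that the counit and antipode extend to $M(\bU\bb)$ with the Hopf algebra axioms holding.

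Closure under $*$ is immediate: each $\be_{\a_k}^{ib\inv t_k}$ is unitary with adjoint obtained by $t_k\mapsto -t_k$, and $G_b(Q+it)\inv$ transforms compatibly, so that the adjoint of an operator in the prescribed form has the same form. Multiplicative closure rests on reordering products of monomials in the $\be_{\a_k}^{ib\inv t}$ using the $q$-commutation relations of the PBW generators, with the reordering constants absorbed into the integrand via Faddeev's quantum dilogarithm identities; the central tool is the noncommutative $q$-binomial integral expressing $(A+B)^{ib\inv t}$, for positive $A,B$ with a fixed Weyl-type commutation relation, as an integral over $A^{ib\inv s}B^{ib\inv(t-s)}$ with a $G_b$-ratio kernel. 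This is the continuous analogue of the Levendorskii--Soibelman formula and yields an operator of exactly the shape of Definition 4.1. For the coproduct, $\D(\be_i)=\be_i\ox 1 + K_i\ox\be_i$ is again a sum of two positive operators with a Weyl commutation relation, so the same $q$-binomial integral realises $\D(\be_i^{ib\inv t})$ as a tensor-product integral in $M(\bU\bb\ox\bU\bb)$; this is then transported to general $\be_{\a_k}$ by conjugation with the quantum Weyl elements $w_i\in M(\bU\bb)$ implementing the Lusztig braid action. The counit and antipode are defined on generators and extended; the Hopf algebra identities hold at the PBW level and persist under the integral transforms.

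Parts (2)--(4) are then largely formal. For (2), $Q$, the contour $C$, and the quantum dilogarithm $G_b$ are all symmetric under $b\leftrightarrow b\inv$, and the transcendental relations identify $\be_{\a_k}^{ib\inv t}$ with $\til{\be_{\a_k}}^{ibt}$, so the algebra built with $b$ coincides with the one built with $b\inv$. For (3), changing $\l$ amounts to conjugation by a translation unitary in the position variables under which the integral form of Definition 4.1 is manifestly closed; in fact the defined subspace of $\cB(L^2(\R^N))$ is $\l$-independent, not merely isomorphic. For (4), independence of the reduced expression reduces by the Matsumoto lemma to the rank-two braid moves $s_is_js_i=s_js_is_j$ and $s_is_j=s_js_i$; at the level of positive representations these are implemented by a unitary integral intertwiner built from $G_b$ (a pentagon/tetrahedron-type identity), and the induced change of integration variables sends the defining integrals for one reduced expression to those for the other.

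The main obstacle sits inside part (1), specifically in verifying that $\D(\be_{\a_k}^{ib\inv t})$ stays inside $M(\bU\bb\ox\bU\bb)$ for non-simple roots. For simple roots the $q$-binomial integral suffices, but for general $\a_k$ one must combine $\D$ with the Lusztig braid action and check that the resulting nested integrals of $G_b$-ratios can still be recast in the prescribed form. Controlling the analyticity and decay of the Schwartz data $F_k$ uniformly through these manipulations, and confirming that the quantum dilogarithm identities close up the algebra rather than generating genuinely new types of integrals, is where the substantive analysis is concentrated.
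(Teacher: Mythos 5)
Your proposal follows essentially the same route as the paper: part (1) rests on the $q$-binomial formula (Lemma \ref{qbinom}) producing the $G_b(Q+it)$ factors that cancel against the normalization, part (2) on the self-duality of $G_b$ together with the transcendental relations, part (3) on the fact that $\l$ enters only through the $H_i$ (the paper phrases this via linearity and injectivity, and also notes the unitary equivalence of the restrictions to the Borel part that your translation argument realizes), and part (4) on integral transformations of the coefficients $F_k$ induced by changes of reduced word, which the paper defers to the detailed computation in the cited reference. The obstacle you flag --- controlling $\D(\be_{\a_k}^{ib\inv t})$ for non-simple roots --- is precisely the part the paper itself leaves to that reference, so your sketch is at least as complete as the proof given here.
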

\begin{proof}
(1) The quantum dilogarithm function $G_b$ appearing in the definition ensures that $\bU\bb$ is indeed a multiplier Hopf algebra, where the coproduct of $\be_{\a_k}$ will pick up factors of $G_b(Q+it_k)$ due to the $q$-binomial formula (Lemma \ref{qbinom}) and mutually cancel, giving the correct analytic properties of the bounded operators. We see immediately that $G_b(Q+it)$ plays the role of the $q$-factorial $[n]_q!$ factors in the compact canonical basis. 

(2) The transcendental relations \eqref{trans} and the duality properties \eqref{selfdual} of $G_b$ ensures that the expression is invariant under $b\corr b\inv$, hence it indeed incorporates the properties of the modular double. 

(3) From the explicit expression of $\cP_\l$, one notes that the dependence of $\l$ only appears as linear terms in $H_i$, and the positive representation is injective (see \cite{Ip2}). Hence the algebra does not depend on $\l$. In fact, there exists a unitary equivalence between $\cP_\l\simeq \cP_{\l'}$ for any $\l,\l'$ when restricted to the Borel part, considered in \cite{Ip5}.

(4) Finally, we prove in \cite{Ip4} that the continuous span by the basis $\prod_{k=1}^N \frac{\be_{\a_k}^{i b\inv t_k}}{G_b(Q+it_k)}$ does not depend on the choice of the expression of the longest element $w_0$. More precisely, the change of words of $w_0$ induces an integral transformation on the coefficients $F_k(t_k)$ that preserves their analytic properties. (In the non-simply-laced case however, the explicit formula is not known for type $G_2$.) It also shows the existence of the antipode $S$ on the multiplier Hopf algebraic level as an unbounded operator.
\end{proof}

Now we can apply the construction of Drinfeld's double in the setting of multiplier Hopf algebra \cite{DvD} to obtain $\cD:=\cD(\bU\bb)$, which is known \cite{DvD2} to be a quasi-triangular multiplier Hopf algebra, where $R$ is given by the canonical element, which is the unique element in $M(\cD\ox \cD)$ such that 
\Eq{\<R,b\ox a\>=\<a,b\>,\tab a\in\cA, b\in \cA'.}
\begin{Def} We define 
\Eq{\bU:=\cU_{q\til{q}}^{C^*}(\g_\R):=\cD(\bU\bb)/(H_i'=(A\inv)_{ij}H_j)}
 to be the Drinfeld's double of the Borel subalgebra $\bU\bb$ modulo the Cartan subalgebra $\fh\subset \bU\bb^-$.
\end{Def}
It follows naturally from the explicit expression of the universal $R$-operator constructed in \cite{Ip4} which generalizes \cite{KR,LS}:
\Eq{R=&\prod_{ij}q^{\half(A\inv)_{ij}H_i\ox H_j}\prod_{k=1}^{N} g_b(\be_{\a_k}\ox \bf_{\a_k}) \prod_{ij}q^{\half(A\inv)_{ij}H_i\ox H_j},}
together with the integral transformation \eqref{Gg} of $g_b$ (replacing the usual $q$-exponential function), and the Hopf pairing between $\bU\bb$ and $\bU\bb^-$ involving $G_b(x)$ (replacing the usual $q$-factorial), that the canonical element is given precisely by $R$. Therefore $\bU$ is indeed a quasi-triangular multiplier Hopf algebra. Furthermore, due to Lemma \ref{unitary}, $R$ induces a unitary transformation between $\cP_{\l_1}\ox \cP_{\l_2}\simeq \cP_{\l_2}\ox \cP_{\l_1}$, hence giving the braiding structure of the positive representations.

\begin{Rem}
In the compact case, Lusztig's canonical basis is certain linear combinations of the PBW-basis that is invariant under the bar involution $q\corr q\inv$. In the non-compact case, it is expected that the true notion of a continuous canonical basis will be certain integral transformations of the continuous PBW-basis constructed above, which obeys some form of non-compact bar involutions involving complex conjugations. Here the positivity condition is expected to play an important role in its definition.
\end{Rem}

\section{Tensor Products of $\cP_\l$}\label{sec:tensor}

The notion of continuous canonical basis may provide the key to study the decomposition of tensor products of positive representations. In the finite dimensional situation generated by a highest weight vector $\xi_\l$ with dominant integral weight $\l$, the $\cU_q(\g)$ module is generated by basis of the form $\{E_\bi^\bc \xi_\l\}$ where $E_\bi^\bc$ is given as in \eqref{cbasis}. The decomposition of tensor product is then, roughly speaking, given by the tensor product of the corresponding basis giving the same weight, modulo certain monomial terms with higher order of $q$ in the coefficients \cite{Lu3}. In the setting of positive representations, there are no highest weight vector anymore, but still one can fix an initial weight vector $\xi_\l$ and define $\cP_\l$ to be ``generated" by generalized functions such as
\Eq{\cP_\l=\mbox{``continuous span"}\left\{\prod_{k=1}^N\frac{\be_{\a_k}^{i b\inv t_k}}{G_{b}(Q+i t_k)}\xi_\l\right\}.}
As an example, in the case of $\cU_{q\til{q}}(\sl(2,\R))$, one can regard $\cP_\l=L^2(\R)$ as generated by the initial weight vector $\d(x)$, (where $K$ acts as $e^{-2\pi b\l}$), and look at the action of $\be^{ib\inv t}$ on $\d(x)$ \cite{Ip4}: 
\Eq{\be^{i b\inv t}\cdot \d(x)=&e^{\frac{\pi i(t-2\l)t}{2}}\frac{G_b(\frac{Q}{2}-i\l+it)}{G_b(\frac{Q}{2}-i\l)}\d(x-t).}
In particular, in the analytic continuation $t\to -ib$, we recover the action of $\be$ given in Proposition \ref{canonicalsl2}.

One can then take these continuous basis and follow the decomposition in the compact case, generalizing the formula for the decomposition $\cP_{\l_1}\ox \cP_{\l_2}$. In principle this will recover the intertwiner:
\Eq{\cP_{\l_1}\ox \cP_{\l_2}\simeq \int_{\R_+}^{\o+} \cP_{\l} \sinh(2\pi b\l)\sinh(2\pi b\inv \l)d\l}
given by a series of integral transformations involving $G_b(x)$ proved in \cite{NT, PT2}.

Of course, one has to be careful when dealing with such generalized functions in terms of functional analysis. Nevertheless, investigating this observation, we have the following more precise conjecture for the higher rank case.
\begin{Conj} The positive representations for $\cU_{q\til{q}}(\g_\R)$ is closed under taking the tensor product, and it decomposes as
\Eq{\cP_{\a}\ox \cP_{\b}\simeq \int_{\R_+^N}^{\o+} \cP_{\vec{\c}} d\mu(\vec{\c}),}
where $\vec{\c}=\sum_{\a\in \D_+} \c_\a\w_\a$ suming over all the positive roots, where $\c_\a\in \R_+$ and $\w_\a$ are the fundamental weights, with the abuse of notation $\w_\a:=\w_{\a_1}+\w_{\a_2}$ if $\a:=\a_1+\a_2$ is not simple. The Plancherel measure is given by $$d\mu(\vec{\c})=\prod_{\a\in\D_+}\sinh(2\pi b\c_\a)\sinh(2\pi b\inv \c_\a)d\c_\a.$$
\end{Conj}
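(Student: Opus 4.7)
\emph{Proof proposal.} The plan is to establish the decomposition by iterating the known rank-one Clebsch--Gordan intertwiner of \cite{NT, PT2} along the continuous canonical basis of $\bU\bb$, and to identify the resulting Plancherel density with the product over positive roots.

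First, I would verify that the coproduct $\D:\bU\to M(\bU\ox\bU)$ from Section \ref{sec:CCB} restricts to a genuine action of the modular double on $\cP_\a\ox\cP_\b\simeq L^2(\R^{2N})$ by positive essentially self-adjoint operators. This amounts to checking that $\D(\be_i)=\be_i\ox K_i+1\ox\be_i$ (and similarly for $\bf_i,K_i$) is a sum of commuting positive operators whose closure remains positive self-adjoint, so that the complex powers $\be_{\a_k}^{ib\inv t_k}$ and their integral transforms against $G_b(Q+it)$ lie in the multiplier completion in the sense of Definition \ref{Ub}.

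Second, fix a reduced expression $w_0=s_{i_1}\cdots s_{i_N}$ and write
\Eqn{\cP_\b\simeq\mbox{``continuous span"}\left\{\prod_{k=1}^N\frac{\be_{\a_k}^{ib\inv t_k}}{G_b(Q+it_k)}\xi_\b\right\}.}
Applying the rank-one intertwiner of \cite{NT,PT2} to the $\sl(2)$-subalgebra generated by $\be_{i_1},\bf_{i_1},K_{i_1}$ produces a first direct integral in a variable $\c_{\a_1}\in\R_+$ with density $\sinh(2\pi b\c_{\a_1})\sinh(2\pi b\inv \c_{\a_1})$. Conjugating by the quantum Weyl element $w_{i_1}$ identifies the next $T_{i_1}$-twisted $\sl(2)$ acting on the non-simple root $\a_2$, and iterating along all $N$ positive roots, with $T_{i_1}\cdots T_{i_{k-1}}$ preserving positivity and the modular duality at each step, should produce the full measure $d\mu(\vec\c)=\prod_{\a\in\D_+}\sinh(2\pi b\c_\a)\sinh(2\pi b\inv\c_\a)d\c_\a$ through a cascade of pentagon-type and beta-type integral identities for $G_b$. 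Independence of the resulting intertwiner on the choice of $w_0$ would then follow from part (4) of the theorem in Section \ref{sec:CCB}, together with the braid relations for the quantum Weyl elements $w_i$ on $\cP_\l$, and would be compatible with the unitary braiding $\cP_{\l_1}\ox\cP_{\l_2}\simeq\cP_{\l_2}\ox\cP_{\l_1}$ provided by the universal $R$-operator.

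The main obstacle will be twofold. Functional-analytically, the ``continuous span" by generalized vectors $\prod_k \be_{\a_k}^{ib\inv t_k}G_b(Q+it_k)\inv\xi_\b$ must be given a rigorous Hilbert space interpretation, and the iterated rank-one Clebsch--Gordan must be upgraded from a formal manipulation of distributional kernels to an honest identity of densely defined unitaries. Combinatorially, extracting exactly one factor $\sinh(2\pi b\c_\a)\sinh(2\pi b\inv\c_\a)$ per positive root $\a\in\D_+$, rather than spurious cross terms from the non-commuting $\sl(2)$-subalgebras attached to non-simple roots, requires a delicate bookkeeping of $G_b$-identities through the cascade, and this is the step whose failure would force a correction to the conjectural Plancherel density.
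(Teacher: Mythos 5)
The statement you are proving is stated in the paper as a \textbf{Conjecture}, and the paper offers no proof of it: it only sketches the same heuristic you describe --- regard $\cP_\l$ as a ``continuous span'' of the basis $\prod_k \be_{\a_k}^{ib\inv t_k}G_b(Q+it_k)\inv\xi_\l$, imitate Lusztig's compact-case description of canonical bases in tensor products, and anchor the whole picture on the rank-one intertwiner of Ponsot--Teschner and Nidaiev--Teschner. So your outline coincides with the paper's intended strategy rather than with an existing argument, and the two obstacles you flag at the end are exactly the open problems; acknowledging them does not discharge them. In particular, the central combinatorial claim --- that the cascade produces exactly one factor $\sinh(2\pi b\c_\a)\sinh(2\pi b\inv\c_\a)$ per positive root --- is essentially the content of the conjecture itself, not a lemma you can defer to ``delicate bookkeeping.''

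Beyond that, one step in your cascade is structurally doubtful as written. The rank-one Clebsch--Gordan decomposition for the $\sl(2)$-triple attached to $\a_1$ is an intertwiner only for that subalgebra; the remaining generators of $\cU_{q\til{q}}(\g_\R)$ do not commute with it, so after the first step the ``multiplicity space'' still carries a nontrivial residual action and you cannot simply conjugate by $w_{i_1}$ and repeat on the next root as if the problem had been reduced by one rank. Relatedly, the direct integral runs over $\R_+^N$ with $N=l(w_0)=|\D_+|$ parameters, while $\cP_{\vec{\c}}$ depends only on $n=\mathrm{rank}(\g)$ of them (through $\sum_\a\c_\a\w_\a$); most of the $\c_\a$ are therefore multiplicity parameters, and your iteration gives no mechanism for separating the $n$ genuine weight directions from the $N-n$ multiplicity directions, which is precisely where the ``spurious cross terms'' you worry about would arise. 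These are genuine gaps, not technicalities, and they are why the statement remains a conjecture in the paper.
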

\begin{Rem}
It was pointed out by Masahito Yamazaki that this Plancherel measure naturally comes from a somewhat indirect connection from supersymmetric gauge theory, which is discussed for example in \cite{DGG,HLP}.
\end{Rem}

In particular, in the simply-laced case, we believe it suffices to prove
\begin{Conj}The tensor product of positive representations for $\cU_{q\til{q}}(\sl(3,\R))$ decomposes as
\Eq{\cP_{(\a_1,\a_2)}\ox \cP_{(\b_1,\b_2)}\simeq \int_{\R_+^3}^{\o+} \cP_{(\c_1+\c_3,\c_2+\c_3)} d\mu(\vec{\c}).}
\end{Conj}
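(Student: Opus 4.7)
The plan is to imitate, in the continuous setting, the Clebsch-Gordan decomposition of Lusztig's canonical basis for $V_\a \otimes V_\b$ in the compact case, with the $q$-factorials $[n]_q!$ replaced by $G_b(Q+it)$ and the basis $\{E_\bi^\bc \xi_\a\}$ replaced by the continuous generalized basis $\prod_{k=1}^3 \frac{\be_{\a_k}^{ib\inv t_k}}{G_b(Q+it_k)}\xi_{(\a_1,\a_2)}$ introduced in Section \ref{sec:CCB}. As a preparatory step I would fix, for each positive representation $\cP_{(\a_1,\a_2)}$ on $L^2(\R^3)$, an initial weight vector $\xi_{(\a_1,\a_2)}$---by analogy with the $\sl(2,\R)$ case this should be a tensor product of delta functions on which the $K_i$ act diagonally---and compute the action of each $\be_{\a_k}^{ib\inv t_k}$ on it explicitly. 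As for $\sl(2,\R)$, the answer should be a translation operator multiplied by a ratio of $G_b$-factors depending linearly on $\a$ and $t_k$.

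Next I would formally tensor two such continuous bases and apply the coproduct of each $\be_{\a_k}^{ib\inv t_k}$, obtained from the multiplier Hopf structure of $\bU\bb$ via the $q$-binomial expansion (Lemma \ref{qbinom}), to regroup the integrand by weight. The matching of weights---familiar from the compact case---forces the six parameters $(t_1,t_2,t_3,s_1,s_2,s_3)$ to collapse onto a single triple $\vec\c=(\c_1,\c_2,\c_3)\in\R_+^3$ whose total weight equals that of $\cP_{(\c_1+\c_3,\c_2+\c_3)}$; this is exactly the parametrization appearing in the conjecture, with $\c_3$ playing the role of the non-simple root $\a_1+\a_2$. The resulting intertwiner should then factor as a composition of three elementary $b$-hypergeometric kernels, one per positive root, each built from $G_b$ using the integral transform \eqref{Gg} between $g_b$ and $G_b$ and the pentagon/reflection identities of $G_b$. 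The Plancherel density $\sinh(2\pi b\c_\a)\sinh(2\pi b\inv\c_\a)d\c_\a$ for each positive root is then expected to emerge from the inversion formula of these $G_b$-kernel transforms, exactly as in the $\sl(2,\R)$ analysis of \cite{PT2,NT}.

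To verify consistency, the construction must be checked to be independent of the two reduced expressions $w_0 = s_1 s_2 s_1 = s_2 s_1 s_2$ of the longest element. This follows, modulo a cocycle compatibility, from part (4) of the Theorem in Section \ref{sec:CCB}: the two choices are related by an explicit $G_b$-integral transform on the coefficients $F_k(t_k)$, and one needs to check that this transform intertwines the two decompositions while preserving the Plancherel measure above. Unitarity of the final intertwiner should follow from Lemma \ref{unitary}, after absorbing the quasi-classical Gaussian prefactor $\prod_{ij}q^{\half(A\inv)_{ij}H_i\ox H_j}$ coming from the Cartan part of the $R$-matrix into a phase.

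The main obstacle is functional-analytic rather than algebraic: the continuous canonical basis is distributional, so every step above must first be performed on a suitable dense subspace of analytic vectors (for instance, smooth compactly supported functions of the $\bH_i$) on which all $G_b$-kernels are absolutely integrable, and then extended by continuity to the full direct-integral Hilbert space. A secondary difficulty is controlling the contour manipulations needed to push the $t_k$-contours through the poles of $G_b$ when matching weights, which is precisely the step where the positivity of the generators---and hence the absence of spectral singularities in $\be_{\a_k}^{ib\inv t_k}$---is expected to play its essential role.
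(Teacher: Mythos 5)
You should first note that the statement you are addressing is stated in the paper as a Conjecture: the paper offers no proof, only the heuristic sketched in Section \ref{sec:tensor} that you have essentially reproduced (fix an initial weight vector $\xi_\l$, act by the continuous basis $\prod_k \be_{\a_k}^{ib\inv t_k}/G_b(Q+it_k)$, apply the coproduct via Lemma \ref{qbinom}, and compare with the $\sl(2,\R)$ analysis of \cite{PT2,NT}). Your proposal is therefore a faithful expansion of the author's own motivation, but it is a research plan rather than a proof, and the points at which it remains a plan are exactly the points the paper leaves open.

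The most serious gap is the step ``the matching of weights forces the six parameters to collapse onto a single triple $\vec{\c}$''. In the compact case this works because weight spaces are finite dimensional and the canonical basis of $V_\a\ox V_\b$ is indexed by pairs of Lusztig data with prescribed total weight, the leading terms being controlled by \cite{Lu3}. In the continuous case the joint spectrum of $\D(K_1),\D(K_2)$ on $\cP_\a\ox\cP_\b\simeq L^2(\R^6)$ is absolutely continuous with infinite multiplicity, and the entire content of the conjecture is the identification of the multiplicity space with $\int^{\o+}\cP_{\vec{\c}}\,d\mu(\vec{\c})$; weight matching alone imposes two constraints on six parameters, not three. To separate the $\vec{\c}$-directions you would need to diagonalize the two Casimir operators simultaneously (equivalently, to construct the full family of intertwiners $\cP_\a\ox\cP_\b\to\cP_{(\c_1+\c_3,\c_2+\c_3)}$ as explicit $G_b$-integral kernels and prove their completeness and orthogonality), and it is precisely this family of $b$-hypergeometric identities --- the rank-two analogue of the $b$-Racah--Wigner calculus of \cite{PT2} --- that does not yet exist. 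Relatedly, the Plancherel density does not simply ``emerge from the inversion formula'': even for $\sl(2,\R)$ it is the output of a delicate spectral analysis of an explicit unbounded difference operator in \cite{PT2,NT}. Finally, your appeal to part (4) of the Theorem in Section \ref{sec:CCB} only gives independence of the algebra $\bU\bb$ on the reduced word; the compatibility of the two decompositions attached to $s_1s_2s_1$ and $s_2s_1s_2$ is an additional unproven cocycle condition, as you yourself note. None of this shows your approach is wrong --- it is the approach the author advocates --- but as written it does not constitute a proof of the conjecture.
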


Finally, let us remark that the situation is simpler if we restrict the positive representations $\cP_\l$ to the Borel part $\fb_\R\subset \g_\R$. Using the notion of $\cU_{q\til{q}}^{C^*}(\fb_\R)$ and the theory of multiplicative unitary associated to its GNS-representations, it is shown in a recent study \cite{Ip5} that the positive representations $\cP_\l$ restricted to $\fb_\R$ does not depend on $\l$, and it is closed under taking tensor products. This construction induces the quantum mutation operator $\bT$ which will be a key ingredient to obtain a candidate for the quantum higher Teichm\"{u}ller theory, generalizing earlier work by Frenkel-Kim \cite{FK}.

\section{Harmonic Analysis of $L^2(G_{q\til{q}}^+(\R))$}\label{sec:harmonic}
In the last section, we would like to discuss the application of multiplier Hopf algebra in the context of harmonic analysis on the quantized algebra of functions on $G_{q\til{q}}^+(\R)$, which is certain dual algebra to $\cU_{q\til{q}}(\g_\R)$. This is motivated from the following form of Peter-Weyl theorem \cite{Ip1, PT1}
\Eq{L^2(SL_{q\til{q}}^+(2,\R))\simeq \int_{\R_+}^\o+ \cP_\l\ox \cP_\l^* d\mu(\l)}
as left and right regular representations of $\cU_{q\til{q}}(\sl(2,\R))$, where the Plancherel measure $d\mu(\l)$ is the same as before.

Here, the Hilbert space $L^2(SL_{q\til{q}}^+(2,\R))$ is constructed as follows. Starting with the \emph{quantum plane} generated by positive self-adjoint elements $\{A,B\}$ with ${AB=q^2BA}$, one define the $C^*$-algebraic version $\cA_{q\til{q}}$ similar to the construction of $\bU\bb$, giving it a multiplier Hopf algebra structure, as well as imposing the modular duality $b\corr b\inv$. With the existence of a Haar functional and certain density conditions, $\cA_{q\til{q}}$ can be shown to be a locally compact quantum group in the sense of \cite{KV1,KV2}. Finally, by applying Woronowicz's \emph{quantum double construction}, which is the dual of Drinfeld's double construction, we obtain a multiplier Hopf algebra $\cA(SL_{q\til{q}}^+(2,\R))$, and the Hilbert space $L^2(SL_{q\til{q}}^+(2,\R))$ is defined by its von Neumann completion using the GNS representation. The analysis utilizes the power of the multiplicative unitary $W$ associated to the GNS representation of the quantum plane, and the corresponding quantum double counterparts, which is the key ingredient to construct the regular representations of $\cU_{q\til{q}}(\sl(2,\R))$.

One natural question is the extension of this result to the higher rank. In \cite{Ip5}, we define the \emph{Gauss-Lusztig's decomposition} (for type $A_n$), which plays the role of Woronowicz's quantum double construction. Here using Lusztig's data for the totally positive matrices, the space $G_{q\til{q}}^+(\R)$ is given by an algebra generated by elements that form interconnecting $q$-tori. Different parametrization of the Lusztig's data gives intertwiners that is also closely related to the positive representations on a combinatorial level \cite{Ip2, KOY}. Imposing positivity, we can mimic the construction in the case of the quantum plane and give this algebra a multiplier Hopf algebra structure, as well as it's von Neumann completion, hence the Hilbert space $L^2(G_{q\til{q}}^+(\R))$. One can then discuss the regular representations of $\cU_{q\til{q}}(\g_\R)$ acting on this space, and give the following conjecture.
\begin{Conj} We have the decomposition
\Eq{L^2(G_{q\til{q}}^+(\R))\simeq \int_{\R_+^n}^\o+ \cP_\l\ox \cP_\l^* d\mu(\l)}
as left and right regular representations of $\cU_{q\til{q}}(\g_\R)$. This time, however, $\l$ runs through the fundamental weights $\w_i$ corresponding to simple roots only.
\end{Conj}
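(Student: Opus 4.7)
The plan is to generalize the $SL(2)$ Peter-Weyl theorem using the locally compact quantum group machinery of \cite{KV1, KV2}. Starting from the Gauss-Lusztig decomposition of \cite{Ip5}, I would express a totally positive element as a product whose entries generate interconnecting $q$-tori, then impose positivity and integrate complex powers against $G_b$-weights in direct analogy with Definition \ref{Ub}. This produces a $C^*$-algebraic multiplier Hopf $*$-algebra $\cA := \cA(G_{q\til{q}}^+(\R))$ invariant under $b\corr b\inv$, whose independence of the reduced expression of $w_0$ should follow from the cluster-type intertwiners relating different Lusztig parametrizations \cite{KOY}.

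Next, I would construct a left-invariant Haar weight on $\cA$, building it inductively from the Haar functional on the quantum plane along the Gauss-Lusztig factorization. Together with the coproduct, this upgrades $\cA$ to a locally compact quantum group in the sense of Kustermans-Vaes, yielding the Hilbert space $L^2(G_{q\til{q}}^+(\R))$ through GNS and a multiplicative unitary $W\in M(\cA\ox \what{\cA})$ satisfying the pentagon equation. Crucially, I would identify $\what{\cA}$ with a completion of the algebra $\cU_{q\til{q}}^{C^*}(\g_\R)$ of Section \ref{sec:CCB}: the Hopf pairing is induced by evaluating quantum matrix coefficients on the continuous PBW generators $\be_{\a_k}^{ib\inv t_k}$, and matching the multiplier Hopf algebra structures on both sides identifies $W$ with the element implementing the biregular representation.

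The decomposition then follows from the Peter-Weyl theorem for locally compact quantum groups: $L^2(G_{q\til{q}}^+(\R))$ decomposes as a direct integral over irreducibles of $\cU_{q\til{q}}(\g_\R)$, which by \cite{FI, Ip2} are precisely the positive representations $\cP_\l$. To pin down that $\l$ runs through $\R_+^n$ corresponding only to fundamental weights rather than the full cone $P_\R^+$, I would observe that the Cartan-torus part of the Gauss-Lusztig decomposition has rank $n$, so the joint spectrum of the center of $\cU_{q\til{q}}^{C^*}(\g_\R)$ acting on $L^2(G_{q\til{q}}^+(\R))$ is exactly $\R_+^n$; the remaining directions in $P_\R^+$ correspond to nilpotent coordinates that are parametrized dually by the unipotent tori rather than by spectral data. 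The Plancherel measure $d\mu(\l)=\prod_i \sinh(2\pi b\l_i)\sinh(2\pi b\inv \l_i)d\l_i$ should drop out of the formal dimension computation via the reflection and shift properties of $G_b$, matching the $SL(2)$ case factor-by-factor across the fundamental weights.

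The main obstacle will be establishing the Kustermans-Vaes axioms rigorously in the higher-rank setting, particularly the existence, left-invariance, and KMS property of the Haar weight on $\cA$: the non-abelian nature of $G^+$ forces nontrivial interactions among the interlocking $q$-tori that are invisible in the rank-one case, and the required analytic estimates on products of quantum dilogarithms must be controlled uniformly across all cluster charts. A secondary difficulty is verifying directly that only fundamental-weight positive representations contribute with the correct multiplicity; this likely requires computing explicit matrix coefficients against the continuous canonical basis of Section \ref{sec:CCB} and checking that the integral transforms arising from changes of reduced expression preserve the conjectured Plancherel density.
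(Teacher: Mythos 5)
The statement you are addressing is stated in the paper as a \emph{Conjecture}; the paper contains no proof of it, only the construction of the Hilbert space $L^2(G_{q\til{q}}^+(\R))$ via the Gauss--Lusztig decomposition and the analogy with the $SL(2)$ Peter--Weyl theorem of \cite{Ip1, PT1}. Your proposal follows essentially the same route the paper sketches as motivation (interconnecting $q$-tori, multiplier Hopf structure, multiplicative unitary, GNS completion, direct-integral decomposition), so there is no divergence of method to report --- but what you have written is a research program, not a proof, and the steps you defer are precisely the ones that are open.

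Concretely, three gaps remain. First, the existence, left-invariance and modular (KMS) properties of the Haar weight on $\cA(G_{q\til{q}}^+(\R))$, needed to invoke the Kustermans--Vaes framework at all, are not established in higher rank; you name this obstacle but offer no strategy beyond ``building it inductively,'' and the interaction of the interlocking $q$-tori under the coproduct is exactly where this breaks down in rank $\geq 2$. Second, there is no off-the-shelf ``Peter--Weyl theorem for locally compact quantum groups'' that delivers a direct integral over \emph{positive} representations only: even granting the Plancherel decomposition of the biregular representation, you must show that its support consists exclusively of the $\cP_\l$ and that no other irreducible $*$-representations of $\cU_{q\til{q}}^{C^*}(\g_\R)$ contribute; the citations \cite{FI, Ip2} construct the positive representations but do not classify all irreducibles, so this exhaustion step is unsupported. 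Third, your argument that $\l$ runs over $\R_+^n$ via the joint spectrum of the center is heuristic: it presupposes a Harish-Chandra-type description of central characters of $\cU_{q\til{q}}^{C^*}(\g_\R)$ on $L^2(G_{q\til{q}}^+(\R))$ and a multiplicity-one statement, neither of which is available. The proposed derivation of the Plancherel density from ``formal dimension'' computations with $G_b$ likewise needs the explicit matrix coefficients, which in rank $\geq 2$ have not been computed. In short, the proposal is a reasonable roadmap consistent with the paper's intent, but every load-bearing step remains to be proved.
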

\begin{appendices}
\section{Quantum Dilogarithms}
First introduced by Faddeev and Kashaev \cite{FKa}, the quantum dilogarithm $G_b(x)$ and its variant $g_b(x)$ play a crucial role in the study of positive representations of split real quantum groups, and also appear in many other areas of mathematics and physics. In this appendix, let us recall the definition and some properties of the quantum dilogarithm functions \cite{BT, Ip1, PT2} that is needed in this paper.

\begin{Def} The quantum dilogarithm function $G_b(x)$ is defined on\\ ${0\leq Re(z)\leq Q}$ by
\Eq{\label{intform} G_b(x)=\over{\ze_b}\exp\left(-\int_{\W}\frac{e^{\pi tz}}{(e^{\pi bt}-1)(e^{\pi b\inv t}-1)}\frac{dt}{t}\right),}
where \Eq{\ze_b=e^{\frac{\pi \bi }{2}(\frac{b^2+b^{-2}}{6}+\frac{1}{2})},}
and the contour goes along $\R$ with a small semicircle going above the pole at $t=0$. This can be extended meromorphically to the whole complex plane with poles at $x=-nb-mb\inv$ and zeros at $x=Q+nb+mb\inv$, for $n,m\in\Z_{\geq0}$;
\end{Def}
The quantum dilogarithm $G_b(x)$ satisfies the following properties:
\begin{Prop} Self-duality:
\Eq{G_b(x)=G_{b\inv}(x);\label{selfdual}}

Functional equations: \Eq{\label{funceq}G_b(x+b^{\pm 1})=(1-e^{2\pi \bi b^{\pm 1}x})G_b(x);}

Reflection property:
\Eq{\label{reflection}G_b(x)G_b(Q-x)=e^{\pi \bi x(x-Q)};}

Complex conjugation: \Eq{\overline{G_b(x)}=\frac{1}{G_b(Q-\bar{x})},\label{Gbcomplex}}
in particular \Eq{\label{gb1}\left|G_b(\frac{Q}{2}+\bi x)\right|=1 \mbox{ for $x\in\R$};}

\label{asymp} Asymptotic properties:
\Eq{G_b(x)\sim\left\{\begin{array}{cc}\bar{\ze_b}&Im(x)\to+\oo\\\ze_b
e^{\pi \bi x(x-Q)}&Im(x)\to-\oo\end{array},\right.}
\end{Prop}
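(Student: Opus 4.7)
All six properties are derived from the integral representation \eqref{intform} by routine complex analysis. Self-duality \eqref{selfdual} is immediate from the manifest $b\leftrightarrow b\inv$ symmetry of the integrand and of $\ze_b$ (which depends only on $b^2+b^{-2}=Q^2-2$). For the functional equation \eqref{funceq}, write
\begin{equation*}
\log\frac{G_b(x+b)}{G_b(x)}=-\int_\W\frac{e^{\pi tx}(e^{\pi bt}-1)}{(e^{\pi bt}-1)(e^{\pi b\inv t}-1)}\frac{dt}{t}=-\int_\W\frac{e^{\pi tx}}{e^{\pi b\inv t}-1}\frac{dt}{t},
\end{equation*}
close the contour in the lower half-plane, and sum the geometric series of residues at $t=-2inb$ ($n\geq 1$) to obtain $-\log(1-e^{2\pi ibx})$; the $b\inv$-shift is symmetric. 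Reflection \eqref{reflection} comes from adding the $x$ and $Q-x$ integrals: the numerator $e^{\pi tx}+e^{\pi t(Q-x)}$ combined with $e^{\pi tQ}=e^{\pi tb}e^{\pi tb\inv}$ algebraically collapses the integrand, and its residue at $t=0$, together with the normalization $2\log\bar\ze_b$, matches $\pi ix(x-Q)$. Complex conjugation \eqref{Gbcomplex} follows by conjugating \eqref{intform} and substituting $t\mapsto -t$; specializing to $\bar x=Q-x$ then gives the unit-modulus statement \eqref{gb1}.

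For the asymptotic behavior, set $I(x):=\int_\W\frac{e^{\pi tx}}{(e^{\pi bt}-1)(e^{\pi b\inv t}-1)}\frac{dt}{t}$ so that $G_b(x)=\bar\ze_b e^{-I(x)}$. When $Im(x)\to+\oo$, shift the contour into the upper half-plane. The crossings are the simple poles at $t=2inb$ and $t=2in/b$ for $n\geq 1$; their residues carry factors $e^{2\pi inbx}$ and $e^{2\pi inx/b}$ of moduli $e^{-2\pi nb\,Im(x)}$ and $e^{-2\pi n\,Im(x)/b}$ respectively, each decaying exponentially and summing to zero. The lifted tail integral decays by dominated convergence, so $I(x)\to 0$ and $G_b(x)\to\bar\ze_b$. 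When $Im(x)\to-\oo$ one shifts downward; the same estimates apply, except that one additionally encloses the triple pole at $t=0$. A Laurent expansion gives
\begin{equation*}
\frac{e^{\pi tx}}{(e^{\pi bt}-1)(e^{\pi b\inv t}-1)\,t}=\frac{1}{\pi^2 t^3}+\frac{x-Q/2}{\pi t^2}+\frac{1}{2t}\Bigl(x(x-Q)+\frac{Q^2+1}{6}\Bigr)+O(1),
\end{equation*}
so the enclosed residue contributes $-\pi i\bigl(x(x-Q)+(Q^2+1)/6\bigr)$ to $I(x)$; combined with $\ze_b^2=e^{\pi i(Q^2+1)/6}$ this promotes $\bar\ze_b$ to $\ze_b$ and yields $G_b(x)\to\ze_b e^{\pi ix(x-Q)}$.

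\textbf{Main obstacle.} The genuine work is concentrated in the asymptotic step. One must carry the Laurent expansion at $t=0$ to the right order to extract precisely the constant $(Q^2+1)/6$, which is exactly what converts $\bar\ze_b$ into $\ze_b$; and one must verify uniform (not merely pointwise) decay of the infinite residue sums at $t=2inb,\,2in/b$ and of the lifted tail integral. Strictly speaking the $Im(x)\to-\oo$ direction is redundant given the $+\oo$ direction and \eqref{reflection}, which gives $G_b(x)=e^{\pi ix(x-Q)}/G_b(Q-x)\to\ze_b e^{\pi ix(x-Q)}$; deriving it from the contour shift instead serves as a consistency check that $\ze_b$ was normalized correctly at the outset.
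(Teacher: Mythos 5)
The paper itself offers no proof of this Proposition: it is recalled in the appendix with citations to \cite{BT,Ip1,PT2}, and those references derive the properties exactly as you propose, by residue analysis of the Shintani-type integral \eqref{intform}. Your treatment of self-duality, of the reflection property (the combined integrand $\frac{e^{\pi tx}+e^{\pi t(Q-x)}}{(e^{\pi bt}-1)(e^{\pi b^{-1}t}-1)t}$ is odd under $t\mapsto -t$, so only the residue at the triple pole at $t=0$ survives), of complex conjugation, and of the asymptotics all check out; in particular your Laurent coefficient $\tfrac12\bigl(x(x-Q)+(Q^2+1)/6\bigr)$ and the identity $\zeta_b^2=e^{\pi i(Q^2+1)/6}$ are correct, and the observation that the $Im(x)\to-\oo$ asymptotics follows from the $+\oo$ one via \eqref{reflection} is a legitimate shortcut.

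One step is wrong as literally written. For the functional equation, after cancelling $e^{\pi bt}-1$ the integrand $\frac{e^{\pi tx}}{(e^{\pi b^{-1}t}-1)t}$ has simple poles at $t=2inb$, $n\neq0$, and a double pole at $t=0$ lying \emph{below} the contour. Closing in the lower half-plane therefore encloses both the poles at $t=-2inb$, $n\geq1$, whose residues sum to $-\log(1-e^{-2\pi ibx})$ (note the minus sign in the exponent), and the double pole at $t=0$, which you omit; the $t=0$ contribution $-2\pi i(bx-\tfrac12)$ is precisely what converts $-\log(1-e^{-2\pi ibx})$ into $-\log(1-e^{2\pi ibx})$ modulo $2\pi i\Z$. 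So, as stated, your computation would produce $G_b(x+b)=(1-e^{-2\pi ibx})G_b(x)$, which contradicts \eqref{funceq} and the stated location of the zeros and poles of $G_b$. The clean choice is to close in the upper half-plane, where only the simple poles at $t=+2inb$, $n\geq1$, are crossed and the geometric series gives $-\log(1-e^{2\pi ibx})$ directly. With that correction the argument is complete, up to the uniformity issues in the contour shifts that you already flag yourself.
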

\begin{Lem}[$q$-binomial theorem]\label{qbinom}
For positive self-adjoint variables $U,V$ with $UV=q^2VU$, we have:
\Eq{(V+U)^{i b\inv t}=\int_{C}\veca{i t\\ i \t}^b V^{i b\inv (t-\t)}U^{i b\inv \t}d\t ,}
where the $q$-beta function (or $q$-binomial coefficient) is given by
\Eq{\veca{t\\\t}^b=\frac{G_b(Q+t)}{G_b(Q+\t)G_b(Q+t-\t)},}
and $C$ is the contour along $\R$ that goes above the pole at $\t=0$
and below the pole at $\t=t$.
\end{Lem}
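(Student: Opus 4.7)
The plan is to reduce the operator identity to a scalar $G_b$-integral identity by realizing the Weyl pair in the canonical quantum plane representation, and then to prove the scalar identity by residue calculus using the functional equation of $G_b$.

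First, I would invoke a Stone--von Neumann type uniqueness for positive self-adjoint Weyl pairs: any $(U,V)$ with $UV = q^2VU$ and both factors positive is unitarily equivalent, up to multiplicity, to the canonical realization $U = e^{2\pi b\hat x}$, $V = e^{2\pi b\hat p}$ on $L^2(\R)$ with $[\hat p,\hat x] = (2\pi i)^{-1}$. In this realization, $V+U$ is a well-studied positive essentially self-adjoint operator, so $(V+U)^{ib^{-1}t}$ is a bounded unitary by Borel functional calculus; meanwhile the right-hand side converges as a Bochner integral of bounded operators, thanks to the exponential decay of $|G_b(Q+i\tau)^{-1}|$ along vertical lines (Proposition~\ref{asymp}).

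Next, I would pass to the Fourier basis in which $V$ is diagonal and examine matrix elements of both sides between plane waves $e^{2\pi i\xi p}$. Each summand $V^{ib^{-1}(t-\tau)}U^{ib^{-1}\tau}$ acts as a phase times a shift, so the matrix element on the right-hand side localizes via a delta function, reducing the claim to the scalar identity
\[
(1+e^{2\pi b\eta})^{ib^{-1}t} \;=\; \int_C \frac{G_b(Q+it)}{G_b(Q+i\tau)\,G_b(Q+i(t-\tau))}\, e^{2\pi i\tau\eta}\, d\tau,
\]
which is the non-compact analogue of Euler's beta integral. The left-hand side is independently identified by the spectral decomposition of $V+U$, applied to the same plane waves.

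I would establish the scalar identity by closing $C$ in the upper half-plane, discarding the arc at infinity using the asymptotics of $G_b$, and summing residues at the lattice $\tau = t - i(nb+mb^{-1})$, $n,m\geq 0$, of poles of $G_b(Q+i(t-\tau))^{-1}$. Iterating the functional equation $G_b(x+b^{\pm 1}) = (1-e^{2\pi ib^{\pm 1}x})G_b(x)$ evaluates each residue in closed form, and the resulting double sum reassembles the $q\til{q}$-binomial series for $(1+e^{2\pi b\eta})^{ib^{-1}t}$ in its region of convergence $|e^{2\pi b\eta}|<1$; the symmetric expansion obtained by closing $C$ in the lower half-plane covers the complementary region, and the two pieces patch by analytic continuation in $\eta$.

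The main obstacle I anticipate is the contour analysis itself: one must track carefully the interaction between the indentations of $C$ (above $\tau=0$, below $\tau=t$) and the two lattices of poles of the $G_b$-reciprocals, which collide as $t\to 0$, and verify the arc-at-infinity estimate uniformly in $t$ and $\eta$ so that the residue sum converges absolutely at the operator level, not merely pointwise on a dense core of test vectors. Positivity of $U$ and $V$ is essential throughout, both to enable the Stone--von Neumann reduction and to guarantee that $V+U$ is positive self-adjoint and hence admits the complex powers $(V+U)^{ib^{-1}t}$.
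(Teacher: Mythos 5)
First, a point of reference: the paper itself gives no proof of this lemma --- it is recalled in the appendix as a known property of $G_b$ from \cite{BT,Ip1,PT2} --- so your proposal is being measured against the standard arguments in those references rather than against anything in the text. Your overall architecture (Stone--von Neumann reduction to the canonical pair $U=e^{2\pi b\hat x}$, $V=e^{2\pi b\hat p}$; a scalar $G_b$-identity; residues plus the functional equation \eqref{funceq}) is the right family of ideas, and your remarks on positivity and on operator-norm convergence are essentially sound, though the decay comes from the full ratio $\frac{G_b(Q+it)}{G_b(Q+i\tau)G_b(Q+i(t-\tau))}\sim e^{-\pi Q|\tau|}$ (via \eqref{reflection} and Proposition \ref{asymp}); the single factor $|G_b(Q+i\tau)|^{-1}$ does \emph{not} decay as $\tau\to+\infty$ but tends to a nonzero constant.

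The genuine gap is in your reduction step. In the basis diagonalizing $V$, the operator $U^{ib^{-1}\tau}$ shifts the $V$-eigenvalue label by $\tau$, so the matrix element $\langle\xi'|\cdot|\xi\rangle$ of the right-hand side collapses the $\tau$-integral to the single point $\tau=\xi'-\xi$; what must actually be verified is the two-variable kernel identity
\[
\langle\xi'|(V+U)^{ib^{-1}t}|\xi\rangle
=\frac{G_b(Q+it)}{G_b(Q+i(\xi'-\xi))\,G_b(Q+i(t-\xi'+\xi))}\;e^{2\pi i(t-\xi'+\xi)\xi'},
\]
and not the one-variable identity $(1+e^{2\pi b\eta})^{ib^{-1}t}=\int_C\veca{i t\\ i \tau}^b e^{2\pi i\tau\eta}d\tau$ that you wrote down, which is just the lemma with $V$ replaced by $1$, i.e.\ its commutative shadow. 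The quadratic phases $e^{2\pi i(t-\tau)(\xi+\tau)}$ produced by the ordering $V^{ib^{-1}(t-\tau)}U^{ib^{-1}\tau}$ are precisely the noncommutative content of the statement and are what pin down the normalization $G_b(Q+\cdot)$ (via \eqref{reflection}); your sketch never confronts them, and without them the scalar identity you propose to prove by residues is not the one the matrix-element computation requires. The standard repairs are either (i) to compute the kernel of $(V+U)^{ib^{-1}t}$ from the unitary equivalence $V+U=g_b(qUV^{-1})\,V\,g_b(qUV^{-1})^*$ (a consequence of \eqref{funceq} and Lemma \ref{unitary}) together with the Fourier representation \eqref{Gg} of $g_b$, or (ii) to show that the right-hand side is a strongly continuous one-parameter unitary group in $t$ --- the group law being the $b$-beta integral for $G_b$ --- whose analytic continuation to $t=-ib$, where the contour is pinched between the poles at $\tau=0$ and $\tau=t$, evaluates by residues and \eqref{funceq} to $V+U$. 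Two smaller slips: the poles of $G_b(Q+i(t-\tau))^{-1}$ in the upper half $\tau$-plane sit at $\tau=t+i(nb+mb^{-1})$, not $t-i(nb+mb^{-1})$; and closing the contour vertically pushes the argument of $G_b(Q+i\tau)$ into the half-plane $\mathrm{Re}\to-\infty$ containing its pole lattice, so the arc estimate you flag as the main obstacle is indeed a substantive issue, not a formality.
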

\begin{Def} The function $g_b(x)$ is defined by
\Eq{g_b(x)=\frac{\over{\ze_b}}{G_b(\frac{Q}{2}+\frac{\log x}{2\pi \bi b})},}
where $\log$ takes the principal branch of $x$.
\end{Def}
\begin{Lem}\label{FT} \cite[(3.31), (3.32)]{BT} We have the following Fourier transformation formula:
\Eq{\int_{\R+i 0} \frac{e^{-\pi i  t^2}}{G_b(Q+\bi t)}X^{i b\inv t}dt=g_b(X) \label{Gg},}
where $X$ is a positive operator and the contour goes above the pole
at $t=0$.
\end{Lem}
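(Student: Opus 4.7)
The plan is to reduce the statement to a scalar Fourier-type identity via the spectral theorem and then verify it by contour deformation. Since $X$ is positive self-adjoint, functional calculus reduces the identity to the pointwise statement: substituting $X = e^{2\pi b s}$ so that $X^{\bi b\inv t} = e^{2\pi \bi t s}$ and unpacking $g_b(X) = \overline{\zeta_b}/G_b(Q/2 - \bi s)$, one must show for each $s \in \R$ that
\[ \int_{\R + \bi 0} \frac{e^{-\pi \bi t^2}}{G_b(Q + \bi t)} \, e^{2\pi \bi t s} \, dt \;=\; \frac{\overline{\zeta_b}}{G_b(Q/2 - \bi s)}. \]

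For the scalar identity, I would first secure absolute convergence by rotating the contour into the direction $\arg t = -\pi/4$, where $e^{-\pi \bi t^2}$ becomes a genuine Gaussian, using the asymptotics of $G_b$ to control $1/G_b(Q+\bi t)$ on the rotated contour. Then I would close the contour further downward and collect residues at the zeros of $G_b(Q + \bi t)$, which lie at $t = -\bi(nb + mb\inv)$ for $n,m \in \Z_{\geq 0}$. Iterating the functional equations \eqref{funceq} expresses each residue in closed form as a $(q^2;q^2)_n^{-1}(\tilde q^2;\tilde q^2)_m^{-1}$-type factor; inserting the exponentials at these points produces Gaussian $q^{n^2}$ and $\tilde q^{m^2}$ factors together with powers of $e^{2\pi b s}$ and $e^{2\pi b\inv s}$. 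The resulting double series factors into two one-variable sums and, by comparison with the infinite product representation of $G_b$, is recognized as $\overline{\zeta_b}/G_b(Q/2 - \bi s)$.

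A cleaner alternative avoiding the explicit residue sum is a difference-equation argument: shifting the contour by $\pm \bi b$ or $\pm \bi b\inv$ and applying \eqref{funceq} shows that both sides satisfy the two $q$- and $\tilde q$-difference equations $F(s - \bi b) = (1 - e^{2\pi b s - \pi \bi b^2}) F(s)$ together with the $b \leftrightarrow b\inv$ counterpart; on the integral side this comes from the functional equation of $G_b$ under the contour shift, on the $g_b$ side it is immediate. Uniqueness of the solution up to a constant follows from the asymptotic behavior of $G_b$ (no non-trivial entire quasi-constants of the appropriate growth exist), and the constant is fixed by evaluating a single Gaussian contour integral, for instance by stationary phase as $s \to +\infty$.

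The main obstacle is analytic rather than algebraic. The bare integral is only conditionally convergent because $e^{-\pi \bi t^2}$ is purely oscillatory and $|1/G_b(Q + \bi t)|$ is unbounded in one of the vertical directions, so the contour rotation, the enumeration of infinitely many residues, and the interchange of residue summation with limits each require careful estimates based on the sharp asymptotics of $G_b$. Finally, lifting the scalar pointwise identity to an operator identity for an unbounded positive self-adjoint $X$ demands interpreting both sides as operator-valued distributions and verifying convergence on a dense common core matched to the spectral decomposition of $X$; this is routine once the pointwise identity is in hand, but is not automatic from the formal manipulations.
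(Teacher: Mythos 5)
The paper itself gives no argument for this lemma --- it is simply quoted from Bytsko--Teschner --- so your proposal must stand on its own. Your reduction to the scalar identity via the spectral theorem (with $X=e^{2\pi bs}$, $g_b(X)=\overline{\zeta_b}/G_b(Q/2-is)$) and the Fresnel-type rotation of the contour into $\arg t=-\pi/4$ to secure absolute convergence are both fine. The genuine gap is in your primary route: closing the contour into the lower half-plane and summing residues cannot work here, because $|q|=|e^{\pi i b^2}|=1$. At the pole $t=-i(nb+mb^{-1})$ the ``Gaussian'' factor $e^{-\pi i t^2}=e^{\pi i (nb+mb^{-1})^2}$ has modulus one and supplies no decay, the factor $e^{2\pi i ts}=e^{2\pi (nb+mb^{-1})s}$ grows exponentially for $s>0$, and the denominators produced by iterating \eqref{funceq}, $\prod_{k=1}^{n}(1-e^{2\pi i b^{2}k})$, suffer from small denominators since $b^2$ is irrational; the residue series therefore diverges. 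Moreover the poles $nb+mb^{-1}$ have density growing linearly with depth, so the connecting arcs cannot be controlled, and there is no convergent infinite-product representation of $G_b$ at real $b$ to compare the putative double sum with --- $G_b$ is defined by the integral \eqref{intform} precisely because the $q$-product diverges when $|q|=1$. This route should be abandoned, not merely ``carefully estimated.''

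Your ``cleaner alternative'' is the workable one, and it is essentially how this identity and its relatives (the $b$-beta integral, the Fourier transform of $G_b$) are established in the literature: both sides satisfy the same pair of first-order difference equations in the directions $ib$ and $ib^{-1}$ (note the correct factor is $1+e^{2\pi bs-\pi i b^{2}}$, not $1-e^{2\pi bs-\pi i b^{2}}$, since $e^{\pi i bQ}=-e^{\pi i b^{2}}$), and a continuous function invariant under two parallel imaginary periods with irrational ratio is constant on vertical lines, hence constant if holomorphic. What you still owe is (i) the analytic continuation of the integral to a strip of width $\max(b,b^{-1})$ in $\mathrm{Im}\,s$, keeping track of which poles of $1/G_b(Q+it)$ are crossed when the contour is shifted, and (ii) a verification that the ratio of the two sides is holomorphic and non-vanishing on such a strip, so that the periodicity argument applies; the normalizing constant is then fixed by comparing asymptotics as $s\to-\infty$, where the right-hand side tends to $1$ by Proposition \ref{asymp}. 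With these points supplied, the second argument gives a correct, self-contained proof of the scalar identity, and the passage to a positive self-adjoint operator $X$ by functional calculus is routine, as you note, once the improper integral is interpreted in the strong sense.
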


\begin{Lem}\label{unitary} By \eqref{gb1}, $|g_b(x)|=1$ when $x\in\R_+$, hence $g_b(X)$ is a unitary operator for any positive operator $X$. Furthermore we have the self-duality of $g_b(x)$ given by
\Eq{\label{selfdualg}g_b(X)=g_{b\inv}(X^{\frac{1}{b^2}}).}
\end{Lem}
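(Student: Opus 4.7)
The plan is to establish the three assertions in sequence, each reducing directly to the previously recorded properties of $G_b$ and to the explicit formula $g_b(x)=\over{\ze_b}/G_b(\frac{Q}{2}+\frac{\log x}{2\pi \bi b})$.

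For the first claim, given $x\in\R_+$, the logarithm $\log x$ is real, so the argument $\frac{Q}{2}+\frac{\log x}{2\pi \bi b}$ has real part exactly $Q/2$ (the second term being purely imaginary since $b>0$). Invoking \eqref{gb1} immediately yields $|G_b(\frac{Q}{2}+\frac{\log x}{2\pi \bi b})|=1$. Combined with the fact that the defining exponent of $\ze_b$ is purely imaginary, so that $|\over{\ze_b}|=1$, this gives $|g_b(x)|=1$ on $\R_+$. Unitarity of $g_b(X)$ for a positive self-adjoint $X$ then follows from the spectral theorem: writing $X=\int_{\R_+}\l\,dE_\l$, the bounded Borel functional calculus defines $g_b(X)=\int g_b(\l)\,dE_\l$, and $g_b(X)^*g_b(X)=\int|g_b(\l)|^2\,dE_\l=1$, with the analogous computation on the other side.

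For the self-duality $g_b(X)=g_{b\inv}(X^{1/b^2})$, I would expand both sides from the definition and exploit the symmetries of the constituents under $b\leftrightarrow b\inv$. The constant $Q=b+b\inv$ is manifestly invariant, and the defining exponent of $\ze_b$ depends on $b$ only through $b^2+b^{-2}$, hence $\ze_{b\inv}=\ze_b$. The essential algebraic simplification is $\frac{\log(X^{1/b^2})}{2\pi \bi b\inv}=\frac{b^{-2}\log X}{2\pi \bi b\inv}=\frac{\log X}{2\pi \bi b}$, so that the argument of $G_b$ appearing in $g_b(X)$ and the argument of $G_{b\inv}$ appearing in $g_{b\inv}(X^{1/b^2})$ literally coincide. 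Applying the self-duality $G_b=G_{b\inv}$ of \eqref{selfdual} then closes the identification at the level of scalar functions, and the identity lifts to operators via the Borel functional calculus applied to $X$.

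No serious obstacle arises; the statement is essentially a bookkeeping consequence of the symmetries of $G_b$ and $\ze_b$. The one point worth flagging is the functional-calculus interpretation when $X$ is unbounded or has $0$ in its spectrum: here $\log x$ diverges as $x\to 0^+$ or $x\to\infty$, but since $g_b$ remains a bounded Borel function of unit modulus on $(0,\infty)$, the spectral calculus goes through unchanged provided $X$ is strictly positive, so that the singular values $\{0,\infty\}$ carry no spectral mass. The principal branch of the logarithm is common to $b$ and $b\inv$, so no branch discrepancy enters into the self-duality formula.
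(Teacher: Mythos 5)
Your proof is correct and follows exactly the route the paper intends: the paper gives no separate proof of this lemma, the justification being precisely the appeal to \eqref{gb1} stated in the lemma itself together with the self-duality \eqref{selfdual} of $G_b$, the invariance of $Q$ and $\ze_b$ under $b\corr b\inv$, and the cancellation $\frac{\log(X^{1/b^2})}{2\pi \bi b\inv}=\frac{\log X}{2\pi \bi b}$. Your added care about the spectral calculus for unbounded strictly positive $X$ is a reasonable (and correct) elaboration of what the paper leaves implicit.
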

\end{appendices}
\section*{Acknowledgments} I would like to thank the organizing committee of the RIMS symposium on String Theory, Integrable Systems and Representation Theory for their invitation. I would also like to thank in particular Hiraku Nakajima, Masato Okado, Tomoki Nakanishi and Yoshiyuki Kimura for stimulating interesting discussions and providing useful comments. This work was supported by World Premier International Research Center Initiative (WPI Initiative), MEXT, Japan.


\begin{thebibliography}{99}
\bibitem{BS}
S. Baaj, G. Skandalis,
\textit{Unitaires multiplicatifs et dualit\'{e} pour les produits crois\'{e}s de $C^* $-alg\`{e}bres. In Annales scientifiques de l'Ecole normale sup\'{e}rieure},
Soci\'{e}t\'{e} math\'{e}matique de France \textbf{26} (4), 425-488, (1993)
 \bibitem{BT}
  A.G. Bytsko, K. Teschner,
  \textit{R-operator, co-product and Haar-measure for the modular double of $\cU_q(\sl(2,\R))$},
  Comm. Math. Phys., \textbf{240}, 171-196,  (2003)
\bibitem{DvD}
L. Delvaux, A. van Daele,
\textit{The Drinfel'd double of multiplier Hopf algebras},
Jour. Alg., \textbf{272}(1), 273-291, (2004)
\bibitem{DvD2}
L. Delvaux, A. van Daele,
\textit{The Drinfel'd double versus the Heisenberg double for an algebraic quantum group},
Jour. Pure App. Alg., \textbf{190}, Issues 1-3, 59-84, (2004)
\bibitem{D}
V. G. Drinfeld,
\textit{Hopf algebras and the quantum Yang-Baxter equation},
Doklady Akademii Nauk SSSR, \textbf{283} (5), 1060-1064, (1985)
\bibitem{DGG}
N. Drukker, D. Gaiotto, and J. Gomis,
\textit{The Virtue of Defects in 4D Gauge Theories and 2D CFTs},
Journal of High Energy Physics, \textbf{6}, 1-54, (2011)
\bibitem{Fa1}
  L.D. Faddeev,
  \textit{Discrete Heisenberg-Weyl group and modular group},
  Lett. Math. Phys., \textbf{34}, 249-254, (1995)
\bibitem{Fa2}
  L.D. Faddeev,
  \textit{Modular double of quantum group},
  arXiv:math/9912078v1 [math.QA], (1999)
\bibitem{FKa}
  L.D. Faddeev, R.M. Kashaev,
  \textit{Quantum dilogarithm},
  Modern Phys. Lett. \textbf{A9}, 427-434, (1994)
\bibitem{FL}
V. A. Fateev, A. V. Litvinov,
\textit{Correlation functions in conformal Toda field theory I},
Journal of High Energy Physics, \textbf{11}, 002, (2007)
\bibitem{FG}
V. Fock, A. Goncharov,
\textit{Moduli spaces of local systems and higher Teichm\"{u}ller theory},
Publications Mathématiques de l'Institut des Hautes Études Scientifiques, \textbf{103} (1), 1-211,  (2006)
 \bibitem{FI}
I. Frenkel, I. Ip,
\textit{Positive representations of split real quantum groups and future perspectives},
Int. Math. Res. Notices, to appear (2013) doi:10.1093/imrn/rns288.
\bibitem{FK}
  I. Frenkel, H. Kim,
  \textit{Quantum Teichm\"{u}ller space from quantum plane},
Duke Math. J., \textbf{161} (2), 305-366, (2012)
\bibitem{HLP}
K. Hosomichi, S. Lee, J. Park,
\textit{AGT on the S-duality Wall},
Journal of High Energy Physics, \textbf{12}, 1-15, (2010)
 \bibitem{Ip1}
  I. Ip,
  \textit{Representation of the quantum plane, its quantum double and harmonic analysis on $GL_q^+(2,R)$},  
Selecta Mathematica, \textbf{19} (4), 987-1082, (2013)
\bibitem{Ip5}
I. Ip,
\textit{Gauss-Lusztig decomposition of $GL_q^+(n,\R)$ and representations by $q$-tori}, arXiv:1111.4025 (2011)
\bibitem{Ip2}
I. Ip,
\textit{Positive representations of split real simply-laced quantum groups}, arXiv:1203:2018, (2012)
\bibitem{Ip3}
I. Ip,
\textit{Positive representations of split real quantum groups of type $B_n$, $C_n$, $F_4$, and $G_2$}, arXiv:1205.2940, (2012)
\bibitem{Ip4}
I. Ip,
\textit{Positive Representations of Split Real Quantum Groups: The Universal $R$ Operator}, Int. Math. Res. Notices, to appear (2013), doi:10.1093/imrn/rnt198
\bibitem{Ip5}
I. Ip,
\textit{On tensor products of positive representations of split real quantum Borel subalgebra $\cU_{q\til{q}}(\fb_\R)$},
arXiv: 1405.4786, (2014)
\bibitem{Ka2}
M. Kashiwara,
\textit{On Crystal Bases of the $Q$-Analogue of Universal Enveloping Algebras},
Duke J., Vol 63 (2), (1991), 465-516
\bibitem{KR}
A. Kirillov, N. Reshetikhin,
\textit{$q$-Weyl group and a multiplicative formula for universal $R$ matrices},
Commun. Math. Phys., \textbf{134}, 421-431, (1990)
\bibitem{KOY}
A. Kuniba, M. Okado, Y. Yamada,
\textit{A Common Structure in PBW Bases of the Nilpotent Subalgebra of $U_q(\g)$ and Quantized Algebra of Functions},
SIGMA \textbf{9}, 049 (2013)
 \bibitem{KV1}
  J. Kustermans and S. Vaes,
  \textit{Locally compact quantum groups},
  Ann. Sci. Ecole Norm. Sup. (4) \textbf{33}, 837-934, (2000)
  \bibitem{KV2}
  J. Kustermans and S. Vaes,
  \textit{Locally compact quantum groups in the von Neumann algebraic setting},
  Math. Scand. \textbf{92}, 68-92, (2003)
\bibitem{LS}
S. Levendorski\u{\i}, Ya. Soibelman,
\textit{Some applications of the quantum Weyl groups},
Jour. Geom. Phys., \textbf{7} No. 2, 241-254, (1990)
\bibitem{Lu1}
G. Lusztig,
\textit{Canonical Bases Arising from Quantized Enveloping Algebras},
Jour. AMS, Vol 3., No. 2 (1990), 447-498
\bibitem{Lu2}
G. Lusztig,
\textit{Canonical Bases Arising from Quantized Enveloping Algebras. II},
Prog. Theo. Phys. Supp. No.102, (1990), 175-201
\bibitem{Lu3}
G. Lusztig,
\textit{Canonical bases in tensor products},
Proc. Natl. Acad. Sci. USA, Vol 89, (1992), 8177-8179
 \bibitem{NT}
  I. Nidaiev, J. Teschner
  \textit{On the relation between the modular double of $\cU_q(\sl(2,\R))$ and the quantum Teichm\"{u}ller theory},
  arXiv:1302.3454  (2013)
\bibitem{PT1}
  B. Ponsot, J. Teschner,
  \textit{Liouville bootstrap via harmonic analysis on a noncompact quantum group},
  arXiv: hep-th/9911110, (1999)
\bibitem{PT2}
  B. Ponsot, J. Teschner,
  \textit{Clebsch-Gordan and Racah-Wigner coefficients for a continuous series of representations of $\cU_q(\mathfrak{sl}(2,\R))$},
  Comm. Math. Phys., \textbf{224}, 613-655, (2001)
\bibitem{T}
J. Teschner,
\textit{On the relation between quantum Liouville theory and the quantized Teichm\"{u}ller spaces},
Int. J. Modern Phys A \textbf{19} (supp02), 459-477, (2004)
\bibitem{Tim}
T. Timmermann,
\textit{An invitation to quantum groups and duality},
EMS Text-books in Mathematics, (2008)
\bibitem{VD}
A. van Daele,
\textit{Multiplier Hopf algebras},
Trans. Amer. Math. Soc. \textbf{342} 917-932, (1994) 
\bibitem{W}
  S.L. Woronowicz,
  \textit{From multiplicative unitaries to quantum groups},
  Int. J. Math. \textbf{7}(1), 127-149, (1996)
\bibitem{Wy}
N. Wyllard,
\textit{$A_{N-1}$ conformal Toda field theory correlation functions from conformal $\cN= 2$ $SU(N)$ quiver gauge theories},
Journal of High Energy Physics, \textbf{11} 002,  (2009)

\end{thebibliography}
\end{document}